\newtheorem{theorem}{Theorem}[section]
\newtheorem{lemma}[theorem]{Lemma}
\newtheorem{proposition}[theorem]{Proposition}
\theoremstyle{remark}
\newtheorem*{definition}{{\bf Definition}}
\theoremstyle{remark}
\newtheorem*{remark}{{\bf Remark}}
\numberwithin{equation}{section}
\newcommand{\CC}{\mathbb{C}}
\newcommand{\dd}{{\rm d}}
\begin{document}

\baselineskip=16pt

\title[Abelian Vortices with Singularities]{Abelian Vortices with Singularities}

\author[J. M. Baptista]{J. M. Baptista}
 
\address{Centre for Mathematical Analysis, Geometry, and Dynamical Systems (CAMGSD), Instituto Superior T\'ecnico, Av. Rovisco Pais, 1049-001 Lisbon, Portugal}

\email{joao.o.baptista@gmail.com}

\author[I. Biswas]{Indranil Biswas}

\address{School of Mathematics, Tata Institute of Fundamental
Research, Homi Bhabha Road, Bombay 400005, India}

\email{indranil@math.tifr.res.in}

\subjclass[2000]{53C07, 14H81, 53D30}

\keywords{Vortex, parabolic pair, conical singularity, $L^2$-metric, volume of moduli space}

\date{}

\begin{abstract}
 Let $L \longrightarrow X$ be a complex line bundle over a compact connected Riemann
surface. We consider the abelian vortex equations on $L$ when the metric on the
surface has finitely many point degeneracies or conical singularities and the
line bundle has parabolic structure. These conditions appear naturally in the study of vortex configurations with constraints, or configurations
invariant under the action of a finite group. We first show that the moduli space of singular vortex solutions is the same as in the regular case. Then we compute the total volume and total scalar curvature
of the moduli space singular vortex solutions. These numbers differ from the case of regular vortices by a very natural term. Finally we exhibit explicit non-trivial vortex solutions over the thrice punctured hyperbolic sphere.
\end{abstract}

\maketitle

\section{Introduction}

Let $X$ be a connected compact Riemann surface and $L \longrightarrow X$ a
complex line bundle of degree $d$. For a given K\"ahler metric $\omega_X$ 
on the 
surface, the abelian vortex equations on $L$ can be formulated in two equivalent ways \cite{Brad}. In the first formulation, one fixes a hermitian metric $h$ on the line bundle and looks for a unitary connection $A$ and a nonzero section $\phi$ that satisfy
\begin{align}
&\bar{\partial}_A \phi \ = \ 0  \label{1.1} \\
& F_A - i e^2 \big(\vert \phi \vert^2_h - \tau \big)\, \omega_X \ =\ 0\, \ ,
\end{align}
where $\tau$ and $e^2$ are positive real constants. In the second formulation, one looks for a holomorphic structure on $L$, a nonzero holomorphic section $\phi$ 
and a hermitian metric $h$ on $L$ such that 
\begin{equation}\label{v-eq}
 F_h - i e^2 \big(\vert \phi \vert^2_h - \tau \big)\, \omega_X \ =\ 0\, ,
\end{equation}
where $F_h$ denotes the curvature of the corresponding
Chern connection. So the condition in \eqref{1.1}
follows from the holomorphicity of $\phi$ in the second formulation,
and the metric $h$ becomes an independent variable in the
second formulation; the Chern connection $A_h$ coincides with the
connection in the first formulation. In this article we will use the
second point of view.

The equations above are the original and simplest example of vortex equations. 
A lot is known about its solutions. One knows, for instance, that for any given 
stable pair $(L, \phi)$, meaning a holomorphic line bundle together with a 
nonzero holomorphic section, if the volume of $X$ is sufficiently large, more
precisely,
\begin{equation}\label{stability}
{\text{\rm Vol}}_\omega \, X \ > \ \frac{2 \pi}{e^2 \tau} \, \deg L \ ,
\end{equation}
then there is exactly one hermitian metric $h$ on $L$ that satisfies 
\eqref{v-eq}. 
This means that the moduli space $M_{vort}^d$ of vortex solutions on a line bundle of degree $d$ is isomorphic 
to space of all isomorphism classes of
stable pairs $(L, \phi)$, which, in turn, is the space of degree 
$d$ effective divisors on $X$, or, equivalently, the symmetric product ${\rm Sym}^d (X)$. If 
condition \eqref{stability} is not satisfied, then there are no solutions of 
\eqref{v-eq} for any stable pair.

Replacing $X$ with higher dimensional K\"ahler manifolds or replacing $L$ with 
more general bundles, the vortex equations and their moduli spaces 
of solutions can be generalized in many different and useful ways (see, for instance, \cite{BDGW, CGMS}). In this article 
we will examine another generalization, distinct from the ones just cited. We 
will consider the vortex equation \eqref{v-eq} on Riemann surfaces with 
singular or degenerate metrics $\omega_X$, and on holomorphic line bundles 
with parabolic structure.

More precisely, we will consider K\"ahler metrics on $X$ that are $C^\infty$ and non-degenerate
on the complement $X\setminus\{q_1\, ,\cdots\, ,q_l\}$ of a finite set of points, and
satisfy the following condition around each singular point $q_j$: there is an open neighborhood $U_j\, 
\subset\, X$ of $q_j$ and a holomorphic coordinate $z\, :\, U_j\, \longrightarrow\, \mathbb C$,
with $z(q_j)\, =\, 0$, such that the restriction of $\omega_X$ to 
$U_j\setminus \{q_j\}$ is of the form
\begin{equation}\label{oU}
\omega_X\vert_{U_j\setminus \{q_j\}}\, =\, f_j(z, \overline{z}) \cdot\vert 
z\vert^{2 \beta_j}
\cdot dz\otimes d\overline{z}\, ,
\end{equation}
where $\beta_j$ is a real number strictly bigger than $-1$ and $f_j$ is a $C^\infty$ function on $U_j$
with $f_j(q_j)\, \not=\, 0$. The above restriction on the weights $\beta_j$ guarantees that
$\omega_X$ has finite total volume. If $\beta_j$ is positive, the metric
is continuous but degenerate at the point. When $\beta_j$ is negative the metric is said to have a conical singularity.

Regarding parabolic singularities on the line bundle, we will consider hermitian metrics $h$
on $L$ that are $C^\infty$ and non-degenerate over the complement $X\setminus\{p_1\, ,
\cdots\, ,p_r\}$ of a finite set of points, and satisfy
the following condition around each point $p_i$: for any holomorphic coordinate function
$z$ around the singularity with $z(p_i)\,=\, 0$,
if $s$ is a holomorphic section of $L$ defined on $U_i$
with $s(p_i)\, \not=\, 0$, then
\begin{equation}\label{par-assympt}
\Vert s\Vert_h^2 \, =\, g_i(z, \overline{z})\cdot \vert z\vert^{2 \alpha_i}
\end{equation}
on $U_i\setminus\{p_i\}$,
where the weight $\alpha_i$ is a non-negative real number, $g_i$ is a continuous function on $U_i$ with $g_i(p_i) \, \not=\, 0$, and $\Vert s\Vert_h$ is the pointwise norm
of the section $s$ with respect to the hermitian structure $h$.
Choosing a holomorphic local trivialization of $L$, the Chern connection $A_h$ of this hermitian metric can be written on an
open neighborhood $U_i$ of the parabolic singularity as
\begin{equation}\label{singular-connection}
A_h\, \vert_{U_i\setminus \{p_i\}} \ = \ \alpha_i \, z^{-1} \dd z \ + 
\eta_i\, ,
\end{equation}
where $\eta_i$ is a $(1,0)$-form on $U_i$. Since
the derivative $\bar{\partial}$ annihilates the first term, the curvature $F_h$ is more regular at the singularity. Using Stokes' theorem it can be shown that, in the presence of parabolic points with sufficiently regular $g_i$s, $i\,\in\, [1\, , r]$, the curvature of the Chern connection has integral
\begin{equation}\label{par-degree}
\frac{\sqrt{-1}}{2 \pi} \int_{X\setminus \{ p_i\}} F_h \ = \ \deg L \ + \ \sum_i \alpha_i \ =: \ \text{\rm par-deg}\, L \ ,
\end{equation}
and this number is usually called the parabolic degree of $L$. The parabolic singularities
on line bundles that we are considering here are a special instance of the much broader
and richer subject of parabolic structures on vector and principal bundles \cite{MY}, \cite{BBN}.

$\ $
One may wonder what is the motivation to study vortices with singularities. On the one hand, 
this article may be regarded as part of an effort to study natural differential objects on 
parabolic bundles and on punctured Riemann surfaces; an effort that so far has been mostly 
centered on the Hermitian--Einstein equation, Higgs bundles and on holomorphic curves. In the 
case of the singular vortex equations important results have been obtained about the 
existence of solutions \cite{BG} and, in a broader setting, about topological properties of the moduli space \cite{MT}, but not yet about metric properties of the moduli space. On the 
other hand, even if one cares only about smooth vortices, parabolic bundles and singular 
metrics appear very naturally as an effective tool to describe vortices that are invariant 
under a given symmetry or that are subject to constraints, as we now explain.

Suppose that a finite group $\Gamma$ acts holomorphically, effectively and
isometrically on a surface $Y$, and that this action lifts to a holomorphic
line bundle $\widehat{L}$ over $Y$. Then $\Gamma$ also acts on sections, connections
and hermitian metrics on $\widehat{L}$. This action takes vortex solutions to vortex solutions and commutes with gauge transformations, so we get
an action of $\Gamma$ on the moduli space $M_{\text{vort}}$ of smooth vortices. The fixed point subvariety $M^\Gamma \subset M_{\text{vort}}$ corresponds to $\Gamma$-invariant vortex configurations and, both from a physical and a mathematical viewpoint, is a natural space to study. One can wonder, for instance, about the geometry of $M^\Gamma$.
It turns out that perhaps the most efficient way to study $\Gamma$-invariant 
vortices on $Y$ is to look at usual vortices on the quotient surface $X = Y / 
\Gamma$. This surface can always be endowed with a complex structure such that 
the projection $\pi : Y \longrightarrow X$ is holomorphic. Crucially, however, 
the quotient metric on $X$ may have singularities. If a point
 $p \in Y$ has a non-trivial isotropy group $\Gamma_p \subset \Gamma$, the quotient metric at $\pi (p)$ will explode like $|z|^{2(1-b)/b}$, where $z$ is a complex coordinate around $\pi (p)$ and the integer $b$ is the cardinality of the subgroup $\Gamma_p$. So we are naturally led to study vortices on surfaces equipped with singular metrics.

A second motivational example is the following. Suppose that one wants to 
study configurations of $m$ smooth vortices on $X$ with the constraint that at 
least $l < m$ vortices are located at a given point $p \in X$. These 
configurations define a subvariety $M_{l.p} \subset M^m_{\text{vort}}$ that is 
biholomorphic, but not isometric, to the smaller moduli space $M^{m - 
l}_{\text{vort}}$. This subvariety is the image of the holomorphic embedding
$M^{m - l}_{\text{vort}} \,\longrightarrow \,M^m_{\text{vort}}$ defined by 
adding $l$ vortices at the location $p$, or, more formally, by the map
\begin{equation}\label{embedding}
(L'\, , \phi' \, , h') \ \longmapsto \ \big( L' \otimes {\mathcal O}_X(p)^{\otimes l} \, , 
\, \phi' . (s_p)^l \, , \, h' . (h_p)^l \big) \ ,
\end{equation}
where ${\mathcal O}_X (p)$ is the holomorphic line bundle over $X$ that has a
section $s_p$ with a single zero located at the point $p$, and $h_p$ is the singular
hermitian metric on ${\mathcal O}_X (p)$ determined by $|s_p|^2_{h_p} = 1$. The inverse of the
map in \eqref{embedding} allows us to identify vortex configurations $(L, \phi, h)$ in
the subvariety $M_{l.p}$ with unconstrained configurations of $(m-l)$ vortices -- a significant simplification. The price to pay for this simplification is that the resulting
unconstrained vortices will now be degenerate at the point $p$. In fact, since $h_p$ explodes
at $p$, the metric $h = h' . (h_p)^l$ will be an honest hermitian metric on
$L\,= \,L' \otimes {\mathcal O}_X(p)^{\otimes l}$ only if the metric $h'$ vanishes at $p$. In this case $h'$ must behave like $|z|^{2l}$ around $p$. Thus constrained vortex configurations can be identified with unconstrained vortices with degeneracies, and we are naturally led to study vortices on line bundles with degenerate hermitian metrics.

An outline of the article is the following. In Section 2 we prove the existence of vortex 
solutions on parabolic line bundles over surfaces with degenerate or singular K\"ahler 
metrics. When these K\"ahler metrics are smooth, the existence of parabolic solutions to the 
(coupled) vortex equations has been established by Biquard and Garc\'{i}a-Prada \cite{BG}.

In Sections 3 and 4 we study the natural $L^2$-metric on the moduli space $M_{\rm vort}$ of vortex solutions. Roughly speaking, if $(\phi_t, h_t )$ is a one-parameter family of vortex solutions, a tangent vector to this path on the moduli space can be represented by the derivative $\dot{\phi}$ of the section and the derivative $\dot{A_h}$ of the Chern connection. The metric on the moduli space is then determined by the $L^2$-norm
\begin{equation} \label{vortexmetric}
\vert \dot{A}_h + \dot{\phi} \vert^2 \ = \ \int_X \Big(\, \frac{1}{4e^2} \: \vert \dot{A_h} \vert^2 \ + \ \vert \dot{\phi} \vert^2_h \, \Big)\, \omega_X
\end{equation}
applied to the horizontal part of the tangent vector, i.e., to the component of $( \dot{A}_h , \dot{\phi})$ perpendicular to all vectors tangent to gauge transformations.
 We show that this metric is well-defined in the case of vortices with singularities and, extending results already established in the smooth case \cite{MN, Perutz, Ba}, compute the total volume and the total scalar curvature of the moduli space for this metric.

In Section 5 we look at the vortex equations for abelian $n$-pairs, also 
called semilocal vortices, in the presence of singularities. Once again we 
start by establishing the existence of vortex solutions, then compute the total volume and scalar curvature of the $L^2$-metric on the moduli space and, finally, as in \cite{Ba}, use those computations to conjecture a formula for the $L^2$-volume of the space of holomorphic curves from $X$ to projective space. Finally in Section 6 we have a brief look at (singular) vortices on hyperbolic surfaces, extend an observation of Manton and Rink \cite{MR}, and exhibit explicit and nontrivial vortex solutions on the thrice punctured hyperbolic sphere.

As a simple and concrete illustration of our results, namely those of Sections 2-4, take the example of $\Gamma \,=\,
\mathbb{Z}_b$ acting on the sphere $Y \,=\, \CC \mathbb{P}^1$ by discrete rotations around the vertical axis. Let $M^m_{l_N , l_S} \subset M^m$ be the moduli space of $\Gamma$-invariant $m$-vortex configurations such that at least $l_N$ vortices are located at the north pole and $l_S$ vortices are located at the south pole. Then these configurations are equivalent to configurations of $m-l_N -l_S$ vortices on the surface $X = \CC \mathbb{P}^1 / \mathbb{Z}_b \simeq \CC \mathbb{P}^1$ equipped with a K\"ahler metric $\omega_X$ with conical singularities at the poles of weight $\beta = (1-b)/b$, and a hermitian metric $h$ with parabolic singularities of weight $\alpha_N = l_N / b$ (respectively, $\alpha_S = l_S / b$) at the north
(respectively, south) pole. Our results for vortices with singularities then say that
$M^m_{l_N , l_S}\,\simeq\, {\text {\rm Sym}}^{m - l_N -l_S} (\CC \mathbb{P}^1)$ and that, with
respect to $L^2$-metric inherited from the moduli space $M^m_{\rm vort}\,=\, {\rm Sym}^{m}(\CC \mathbb{P}^1)$,
the volume of the subvariety is 
\[
{\text {\rm Vol}}\, M^m_{l_N , l_S} \ = \ \frac{\pi^{m - l_N -l_S}}{(m - l_N -l_S)!} \, \Big(\,\frac{\tau}{b} \, 
\Big( {\rm Vol}\, Y \, - \, \frac{2 \pi}{e^2\tau} (l_N + l_S) \Big) \, -\, \frac{2 \pi}{e^2}\, (m - l_N -l_S) \,
\Big)^{m - l_N -l_S}\, .
\]
This coincides with the $L^2$-volume of the moduli space of $m - l_N -l_S$ regular vortices living on a sphere $\CC \mathbb{P}^1$ of Riemannian volume $b^{-1} \, ( {\rm Vol}\, Y - 2 \pi (l_N + l_S)/(e^2 \tau)\, )$. Heuristically, this decrease in the effective volume of $Y$ can be interpreted by saying that each vortex fixed at the poles of $\CC \mathbb{P}^1$ occupies a volume equal to $2 \pi / (e^2 \tau)$, and the remaining volume of the sphere is divided by $b$ due to the imposed $\mathbb{Z}_b$ symmetry.

Beyond this simple example, the results of Sections 2-4 are applicable to more general settings of higher genus surfaces, other groups $\Gamma$, and also to metrics $\omega$ and $h$ whose singularities and degeneracies cannot be obtained as quotients of regular metrics by actions of finite groups.

\section{Existence of vortex solutions}

Let $X$ be a compact connected Riemann surface, and let $\omega$
be a K\"ahler metric on $X$ 
with a finite number of singularities. Let $L \longrightarrow X$ be a 
holomorphic line bundle equipped with a nonzero holomorphic section $\phi$ and 
parabolic weights $\{\alpha_1\, , \cdots\, , \alpha_r\}$ at the parabolic
points $\{p_1\, , \cdots\, , p_r\}$. We assume that the parabolic weights
 satisfy $\alpha_i \geq 0$; in particular we allow $\alpha_i\, \geq\, 1$.
We assume that the singularities of $\omega$ are
represented by a formal sum $\sum_{j=1}^l \beta_j \, q_j$, in the sense of
\eqref{oU}, with weights $\beta_j > -1$. In particular, $(X, \omega)$ has a finite volume. 
In the case $X\,=\, {\mathbb C}{\mathbb P}^1$ we make the additional assumption
\begin{itemize}
\item The set $\{ j \in \{ 1, \ldots, l \} : \beta_j < 0 \}$ has cardinality different from 1.
\end{itemize}

In this section we will show that there exists a hermitian metric $h$ on $L$ 
that satisfies the vortex equation \eqref{v-eq} and is compatible with the 
parabolic data in the sense of \eqref{oU} and \eqref{par-assympt}. This 
extends the classic Hitchin-Kobayashi correspondence for abelian vortices 
\cite{Brad, BG} to our singular setting. As usual, integrating the vortex 
equation \eqref{v-eq} over $X$, one recognizes that a necessary condition for existence of a solution is that
\begin{equation}\label{stability-c}
{\text{\rm Vol}}_\omega \, X \ > \ \frac{2 \pi}{\tau e^2} \Big(\deg L \, + \, \sum_i \alpha_i \Big) \ ,
\end{equation}
at least for solutions such that the Chern formula \eqref{chern-formula} given below
remains valid. Theorem \ref{existence} shows that, just as in the smooth case, once this inequality is satisfied then
there are no more obstacles to the existence of vortex solutions.

For convenience in the statement of the theorem, we will consider the parabolic weights $\{\alpha_1\, , \cdots\, , \alpha_r\}$
as a function $\alpha$ on $X$ defined as follows: $\alpha (p_i) \,=\, \alpha_i$
for all $i\,\in\,\{1\, , \cdots\, ,r\}$ and $\alpha (z)\,=\, 0$ if $z\, \not\in\,
\{p_1\, , \cdots\, ,p_r\}$.
Similarly, we consider the weights of the singular metric $\omega$ as a function
$\beta$ on $X$ defined as $\beta (q_j) \,=\, \beta_j$
for all $j\,\in\,\{1\, , \cdots\, ,l\}$ and $\beta (z)\,=\, 0$ if $z\, \not\in\,
\{q_1\, , \cdots\, ,q_l\}$.

\begin{theorem}\label{existence}
If condition \eqref{stability-c}
is satisfied there exists a unique smooth hermitian metric $h$ on $L$ over
$X\setminus (\{p_1\, , \cdots\, , p_r\}\cup \{q_1\, , \cdots\, , q_l\})$
that satisfies the vortex equation and has the form $h(z) \,=\, e^{2u} |z|^{2\alpha (z_0)}$
on a neighborhood of each singular point $z_0$. The function $u$
is of class $C^{k(\alpha (z_0),\beta(z_0))}$ at the singular point, where the integer
$k(\alpha (z_0),\beta(z_0)) \,\geq\, 0$ is defined in \eqref{definition_k}.
For this solution of the vortex equation, the integral of the curvature of
the Chern connection is
\begin{equation} \label{chern-formula}
\int_X F_h \ =\ - 2\pi \sqrt{-1}\Big(\deg L \, + \, \sum_i \alpha_i \Big)\ .
\end{equation}
\end{theorem}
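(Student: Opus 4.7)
The plan is to follow the standard Bradlow--Hitchin--Kobayashi reduction \cite{Brad, BG} used in the smooth case, but carried out on the punctured surface $X^\circ := X\setminus\{p_1,\ldots,p_r,q_1,\ldots,q_l\}$ with function spaces adapted to the prescribed singular behavior. First, using a partition of unity, I would construct a smooth reference hermitian metric $h_0$ on $L|_{X^\circ}$ that has the model form $h_0 = |z|^{2\alpha_i}$ in the chosen local coordinate near each parabolic point $p_i$. Any compatible metric then has the form $h = h_0\, e^{2u}$ with $u$ a real function on $X^\circ$, and the vortex equation \eqref{v-eq} becomes the scalar semilinear equation
\[
-\Delta_\omega u \;+\; e^2 |\phi|^2_{h_0}\, e^{2u} \;=\; e^2 \tau \;-\; i\Lambda_\omega F_{h_0},
\]
where $\Delta_\omega$ and $\Lambda_\omega$ are the Laplacian and the $\omega$-contraction on $X^\circ$. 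By \eqref{par-degree}, the right-hand side is integrable on $X$ and $\int_X i\Lambda_\omega F_{h_0}\, \omega = 2\pi(\deg L + \sum_i \alpha_i)$.

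Next I would solve this equation by the direct method, minimizing the convex functional
\[
J(u) \;=\; \int_X \left(\tfrac{1}{2}|du|^2_\omega \;+\; \tfrac{1}{2} e^2 |\phi|^2_{h_0}\, e^{2u} \;+\; \big(i\Lambda_\omega F_{h_0} - e^2 \tau\big)\, u\right) \omega
\]
over a weighted Sobolev completion $W$ of the smooth functions with compact support in $X^\circ$. Writing $u = c + v$ with $\int_X v\, \omega = 0$, the linear term contributes $c$ times $2\pi(\deg L + \sum_i \alpha_i) - e^2 \tau \, \mathrm{Vol}_\omega X$, which is strictly negative by the stability condition \eqref{stability-c}. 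Combined with the exponential term (which dominates as $c\to +\infty$) and a Poincar\'e-type inequality for the mean-zero part $v$, this forces $J$ to be coercive on $W$. Weak lower semicontinuity of $J$ then produces a minimizer $u_\star$, and strict convexity of $J$ gives uniqueness.

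The main obstacle is the construction of the weighted space $W$. It must be tailored so that (i) a Poincar\'e inequality holds despite the fact that $\omega$ degenerates or blows up at each $q_j$, with weights $\beta_j$ possibly close to $-1$; (ii) the integration by parts used to evaluate $\int_X i\Lambda_\omega F_{h_0}\,\omega$ is valid, which requires that functions in $W$ decay at the parabolic points at a rate compatible with the singular form \eqref{singular-connection} of the Chern connection of $h_0$; and (iii) any $W$-solution defines an honest hermitian metric with the admissible asymptotics at all singular points. The excluded case of $X = \CC\mathbb{P}^1$ with exactly one point of negative $\beta$ should correspond precisely to a Gauss--Bonnet obstruction that spoils step (ii).

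Finally, interior elliptic regularity on $X^\circ$ upgrades the minimizer to a smooth solution away from the singular points. At each singular point, a Schauder bootstrap in weighted H\"older spaces, applied to the equation obtained by subtracting off the model behavior of $h_0$ and $\omega$, yields the claimed $C^{k(\alpha(z_0),\beta(z_0))}$ regularity; the integer $k(\alpha,\beta)$ of \eqref{definition_k} is precisely the highest order of regular behavior permitted by the pair of weights before logarithmic correction terms intervene. The Chern integral formula \eqref{chern-formula} then follows by integrating \eqref{v-eq} over $X$ and applying \eqref{par-degree} to $h = h_0\, e^{2u_\star}$, whose required asymptotics at each $p_i$ are those of $h_0$ by construction.
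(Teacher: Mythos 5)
Your plan is a genuinely different route from the paper's. The paper does \emph{not} work on the punctured surface with weighted Sobolev spaces. Instead, in Lemma \ref{le-e} it writes $\omega = \rho\,\omega_0$ for a \emph{smooth} background K\"ahler form $\omega_0$ and $h = e^{2u}h_0$ for a background parabolic metric $h_0$, so that the vortex equation becomes the Kazdan--Warner equation $\Delta_0 u = Ke^{2u}-K_1$ with respect to the \emph{smooth} Laplacian $\Delta_0$; the singularities of $\omega$ and $h_0$ are absorbed into the coefficients $K=-e^2\rho|s|^2_{h_0}$ and $K_1$, which are continuous precisely when all $\beta_j\geq 0$. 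Existence, uniqueness and the regularity index $k(\alpha,\beta)=2+\min\{\nu(\beta),\nu(\alpha+\beta)\}$ then come straight from \cite{KW} plus the elementary observation that $\rho$ is $C^{\nu(\beta)}$ and $\rho|s|^2_{h_0}$ is $C^{\nu(\alpha+\beta)}$. The case of negative $\beta_j$ is handled not analytically but by pulling back to a ramified Galois cover $f:Y\to X$ (via \cite{Na}) on which all weights become nonnegative, solving there, and descending by uniqueness. Your direct variational approach is plausible in outline (it is closer in spirit to \cite{BG} and to Troyanov-type arguments for conical metrics, and the conformal invariance of the Dirichlet integral in dimension two works in its favor), but as written it is a program rather than a proof.

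The concrete gaps are these. First, you explicitly defer the construction of the weighted space $W$, the Poincar\'e/Moser--Trudinger estimates in the presence of weights $|z|^{2\beta_j}$ with $\beta_j$ arbitrarily close to $-1$, and the justification of the integration by parts -- but these are exactly the points where the singular case differs from Bradlow's argument, so nothing in the singular setting has actually been proved. Second, your interpretation of the hypothesis on $X=\CC\mathbb{P}^1$ is wrong: in the paper that hypothesis is a purely topological condition needed for Namba's theorem to produce a branched Galois cover with prescribed ramification (a cover of the sphere branched over a single point does not exist); it is an artifact of the covering method, not a ``Gauss--Bonnet obstruction'' in the PDE, and in a genuinely direct variational proof there is no visible reason it should appear at all. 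Third, the regularity statement is not established: the specific dichotomy in \eqref{definition_k} -- $u\in C^{2+\min\{\nu(\beta),\nu(\alpha+\beta)\}}$ when $\beta\geq 0$ but only $u\in C^0$ when $-1<\beta<0$ -- does not follow from an unspecified ``Schauder bootstrap in weighted H\"older spaces''; in the paper it falls out of the $C^k$-dependence of Kazdan--Warner solutions on the coefficients for $\beta\geq 0$, and for $\beta<0$ continuity is obtained only by writing $u$ as the descent of a $C^{k(\alpha',\beta')}$ function on the cover through the map $z\mapsto z^{n_j}$. Until these three points are filled in, the proposal does not constitute a proof of Theorem \ref{existence}.
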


\begin{definition}
For any two real numbers $\alpha \geq 0$ and $\beta > -1$ we call
\begin{equation}\label{definition_k}
k(\alpha, \beta) \ := \ \begin{cases}
2+ \text{min}\, \big\{\nu(\beta), \ \nu (\alpha + \beta) \big\} &\mbox{if } \ \beta \geq 0 \\
0 &\mbox{if }\ 0> \beta > -1 \ ,
\end{cases} 
\end{equation}
where $\nu(\lambda) = + \infty$ if $\lambda$ is an integer and is the integral part $\nu (\lambda) = [\lambda]$ otherwise. 
\end{definition}

\begin{remark}
Observe that $k(\alpha , \beta) = \infty$ precisely if $\alpha$ and $\beta$ 
are integers. In this case the function $u$ and the curvature $F_h$ are 
$C^\infty$ at the singularity. If $\beta \geq 
0$, then $k(\alpha\, , \beta)\,\geq\, 2$, and so we can still guarantee that $u$ is at 
least $C^2$ and that $F_h$ is at least continuous. For negative $\beta$, the 
curvature $F_h$ has finite flux (it is integrable) but can explode at the 
singularity -- a fact that is apparent directly from the vortex equation.
\end{remark}

\begin{remark}
Theorem \ref{existence} says that once the prescription of singularities is 
fixed, there is a unique vortex solution for each choice of holomorphic 
structure on $L$ together with a nonzero holomorphic section $\phi$. Since the latter 
choices are parametrized by the set of effective divisors on the surface $X$, we
conclude that, just as in the smooth case, the moduli space $M^d_\text{{\rm s-vort}}$
of vortices with singularities is isomorphic to the symmetric product ${\rm Sym}^d (X)$.
\end{remark}

\noindent
The Yang-Mills-Higgs functional of a field configuration $(A, \phi)$ is defined by the integral
\[
E(A, \phi) \ := \ \int_X \mathcal{E}(A, \phi) \ \omega_X
\]
of the standard energy density
\[
 \mathcal{E}(A, \phi) \ := \ \frac{1}{2 e^2} \, \vert F_A \vert^2 \ + \ \vert \dd_A \phi \vert^2 \ + \ \frac{e^2}{2} \big(\vert \phi \vert^2 - \tau \big)^2 \ ,
\]
where the norms are induced by the metric $\omega_X$ on the surface and the hermitian
metric $h$ on the line bundle. When the field configuration $(A_h , \phi)$ is that of a vortex solution with singularities, one can perform the integral over the punctured surface $X \setminus \{ p_i , q_j \}$ to obtain:
\noindent
\begin{proposition}\label{energy}
The vortex solutions of Theorem \ref{existence} have energy
\[
E(A_h, \phi) \ = \ 2 \pi \tau \Big(\deg L \, + \, \sum_i \alpha_i \Big) \ .
\]
\end{proposition}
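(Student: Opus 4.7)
The plan is to apply the standard Bogomolny reorganization of the Yang--Mills--Higgs density and then invoke the Chern formula \eqref{chern-formula} of Theorem \ref{existence}. On the regular locus $X^\circ := X\setminus\{p_1,\ldots,p_r,q_1,\ldots,q_l\}$ the pointwise identity
\[
\mathcal{E}(A_h,\phi)\,\omega_X \;=\; \frac{1}{2e^2}\bigl({-i}*F_h - e^2(|\phi|_h^2-\tau)\bigr)^{2}\omega_X + 2\,|\bar\partial_{A_h}\phi|_h^2\,\omega_X + \tau\,(i\,F_h) + d\theta
\]
holds, with $\theta$ a smooth 1-form on $X^\circ$ constructed from $\phi$ and $d_{A_h}\phi$ via the Kähler identity relating $|d_A\phi|^2$ and $|\bar\partial_A\phi|^2$ modulo an exact differential. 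Since the solution of Theorem \ref{existence} satisfies \eqref{1.1} and \eqref{v-eq} on $X^\circ$, the two squared terms vanish pointwise, leaving $\mathcal{E}(A_h,\phi)\,\omega_X = \tau\,(i\,F_h) + d\theta$.

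Next I would integrate this identity over the exhaustion $X_\epsilon := X\setminus\bigl(\bigcup_i\overline{B_\epsilon(p_i)}\cup\bigcup_j\overline{B_\epsilon(q_j)}\bigr)$ and let $\epsilon\to 0$. By the definition of $E$, the left-hand side converges to $E(A_h,\phi)$; the bulk contribution on the right converges to $2\pi\tau(\deg L+\sum_i\alpha_i)$ by \eqref{chern-formula}. The proposition is therefore equivalent to showing $\int_{\partial X_\epsilon}\theta\to 0$ as $\epsilon\to 0$.

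The boundary analysis is where the real work lies. Around each pure metric singularity $q_j$ the section $\phi$ and the connection $A_h$ are smooth in a holomorphic local frame, so $\theta$ is bounded and $\int_{\partial B_\epsilon(q_j)}\theta = O(\epsilon)\to 0$. Around a parabolic point $p_i$, one uses the local asymptotics $|\phi|_h^2 = g(z,\bar z)|z|^{2(n_i+\alpha_i)}$ (with $n_i$ the vanishing order of $\phi$ at $p_i$), the local form \eqref{singular-connection} of $A_h$, and the $C^{k(\alpha_i,\beta_i)}$-regularity of $u$ provided by Theorem \ref{existence}. The potentially singular contribution to $\theta$ comes from contracting the $\alpha_i z^{-1}dz$ term of $A_h$ with $\phi$; one checks that it either produces a purely angular piece that cancels upon integration around $\partial B_\epsilon(p_i)$ or is suppressed by a factor $|z|^{2(n_i+\alpha_i)}\to 0$. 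All remaining contributions to $\theta$ are bounded and yield $O(\epsilon)$ boundary terms.

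The hard part will be precisely this boundary analysis at the parabolic points, where both $A_h$ and $|\phi|_h^2$ may be genuinely singular; the regularity class $C^{k(\alpha_i,\beta_i)}$ of $u$ together with the precise asymptotics \eqref{par-assympt}--\eqref{singular-connection} are what make $\theta$ controllable in the limit. Once these estimates are in place, the identity yields $E(A_h,\phi) = 2\pi\tau(\deg L + \sum_i\alpha_i)$, as claimed.
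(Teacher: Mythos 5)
Your Bogomolny rearrangement and the localization of the exact term to boundary circles is exactly the paper's strategy for the case where all metric weights satisfy $\beta_j \geq 0$: there the paper writes $h = e^{2u}|z|^{2\alpha}$ near a parabolic point, computes $\int_{C_R} h\,\bar\phi\,\dd_A\phi = \sqrt{-1}\int_0^{2\pi}\bigl(R^{2\alpha+1}e^{2u}\partial_R u + \alpha R^{2\alpha}e^{2u}\bigr)\dd\theta$ in the trivialization $\phi = 1$, and uses the $C^2$ regularity of $u$ guaranteed by Theorem \ref{existence} to conclude the limit vanishes. Your sketch of the parabolic boundary analysis is consistent with that.

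However, there is a genuine gap in your treatment of the conical singularities $q_j$ with $-1 < \beta_j < 0$. You assert that near a pure metric singularity ``the section $\phi$ and the connection $A_h$ are smooth in a holomorphic local frame, so $\theta$ is bounded.'' This is unjustified: by the definition \eqref{definition_k}, $k(\alpha,\beta) = 0$ when $\beta < 0$, so Theorem \ref{existence} only gives $u \in C^0$ at such a point, and the remark following the theorem explicitly notes that $F_h$ can explode there. Consequently $A_h$, which involves $\partial u$, is not known to be bounded near $q_j$, your 1-form $\theta$ is not controlled, and even the integrability of $|\dd_A\phi|^2\,\omega_X$ near $q_j$ (needed for the left-hand side of your exhaustion argument to converge to $E$) is not established. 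The paper avoids all of this by a different mechanism: it passes to the ramified Galois cover $f : Y \to X$ of \eqref{f}, on which the pulled-back metric has only non-negative weights, applies the $\beta \geq 0$ case upstairs, and uses Galois invariance plus $E_X(A,\phi) = m^{-1}E_Y(f^\ast A, f^\ast\phi)$ to descend. You would need either to incorporate this covering trick or to supply genuinely new decay estimates for $\partial u$ near the conical points, which Theorem \ref{existence} as stated does not provide.
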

\noindent
Thus, when singular fields are allowed, the energy of a vortex solution is no longer restricted to be an integral multiple of $2 \pi \tau$. A parabolic point where the hermitian metric $h$ degenerates (so the corresponding Chern connection $A_h$ has a logarithmic singularity, as in
\eqref{singular-connection}) contributes to the total energy with an amount proportional to the parabolic weight. 

\medskip
\noindent
\textbf{Proof of Theorem \ref{existence}.}\, 
The case where all $\beta_j$ are nonnegative is treated in Lemma \ref{le-e} below.
We assume that some $\beta_j$ is negative.

Let $n_j$ be a positive integer such that $\beta_j\,\geq\, 
(1-n_j)/n_j$ and call $\gamma_j \in [0\, , +\infty[$ the difference $\beta_j
- (1-n_j)/n_j$. From \cite[p. 29, Theorem 1.2.15]{Na} we know that there
is a finite (ramified) Galois covering
\begin{equation}\label{f}
f \,:\, Y \,\longrightarrow \,X
\end{equation}
such that each point in the inverse image $f^{-1} (q_j)$ has ramification 
index $n_j -1$. Note that to use \cite[Theorem 1.2.1]{Na}, we need the assumption
made earlier for $X\,=\, {\mathbb C}{\mathbb P}^1$.
Then around each point of $f^{-1}(q_j)$ the map $f$ is like 
$z\,\longmapsto \,
z^{n_j}$, and the pullback of $\omega$ behaves like $f^\ast \omega \, \sim \,
|z|^{2\gamma_j n_j } \, \dd z \wedge \dd \bar{z}$. Notice that by definition 
of $n_j$ we have $\gamma_j n_j \,\geq\, 0$. In particular, the K\"ahler 
form 
$f^\ast \omega$ on $Y$ satisfies the conditions of Lemma \ref{le-e}. So 
consider 
the 
pullback bundle $f^\ast L \,\longrightarrow\, Y$ equipped with the pullback 
section $f^\ast s$. The parabolic weight at the point $f^{-1}(p_i)$ is $\alpha_i$ if
$p_i\, \not\in\, \{q_1\, ,\cdots\, ,q_l\}$, and it is $n_j\alpha_i$ if $p_i\,=\, q_j$. In other
words, the parabolic data on $f^\ast L$ is the formal sum
$\sum_i (r_{f^{-1}(p_i)} +1 )\, \alpha_i \, f^{-1}(p_i)$, where $r_x$ is the ramification index at $x$.
Denoting by $m$ the number of sheets of $f$, we have that
\[
2\pi\, \text{\rm par-deg}\, (f^\ast L) \, - \, \tau e^2 \, {\rm Vol}_{f^\ast \omega}\, Y\, =
\, m \, \big( 2\pi \, \text{\rm par-deg}\, (L) \,- \, \tau e^2\,{\rm Vol}_{\omega}\, X \big) \
\,<\, \ 0\, . 
\]
We can therefore apply Lemma \ref{le-e} to obtain a hermitian metric $h'$ that 
satisfies the vortex equation for $(f^\ast L , f^\ast s )$ on the surface $(Y, 
f^\ast \omega)$ and is compatible with the parabolic data. This hermitian 
metric is continuous everywhere and is $C^\infty$ away from the inverse images 
$f^{-1} (p_i)$ and $f^{-1}(q_j)$.

By uniqueness of vortex solutions on $Y$, 
the metric $h'$ must be invariant under the action of the Galois group of the 
cover $f$, and in particular $h'$ descends to a continuous hermitian metric $h$ 
on the line bundle $L \longrightarrow X$. 
Since $f$ is a local biholomorphism and isometry away from the ramification 
points, $h$ satisfies the vortex equation for $(L, s, \omega)$ and all the 
statements of the theorem are true on the domain $X \setminus \{ q_j \, \mid \, 
\beta_j < 0\}$. 

Now choose a point $q_j \in X$ such that $\beta_j < 0$. As was seen above, at 
each point in $f^{-1} (q_j)$, the pull-back metric $f^\ast \omega$ has a 
singularity with weight $\beta' = \gamma_j n_j$. Since $q_j$ may coincide with 
one of the parabolic points $p_i$, we must also allow for a (possibly zero) 
parabolic weight $\alpha_j$ at $q_j$, so a parabolic weight $\alpha' = n_j 
\alpha_j$ at $f^{-1} (q_j)$. Then Lemma \ref{le-e} asserts that at each point in 
$f^{-1}(q_j)$ the $\text{Gal}(f)$--invariant hermitian metric $h'$ is of the form
$e^{2u'} |z|^{2\alpha'}$, where $u'$ is also invariant and is $C^{k(\alpha', \beta')}$, with
\[
k(\alpha', \beta') \ = \ 2+ \text{min}\, \big\{\nu(n_j \beta_j + n_j -1), \ \nu (n_j\alpha_j + n_j\beta_j +n_j -1) \big\}\ . 
\] 
Since the cover map at the ramification point is like $z \longmapsto w = 
z^{n_j}$, 
the metric $h$ downstairs will be of the form $e^{2u} |w|^{2\alpha_j}$, where $u$, being the quotient of $u'$, is continuous at $q_j$.

The formula for the integral of the Chern curvature $F_h$ follows from the 
integral of $F_{h'}$ as given by Lemma \ref{le-e}:
\begin{align*}
\int_{X} F_h \ &= \ m^{-1} \int_{Y} F_{h'} \ = \ m^{-1} \deg (f^\ast L)\ +\ m^{-1} \deg \Big( \sum_i (r_{f^{-1}(p_i)} +1 )\, \alpha_i \, f^{-1}(p_i) \Big) \\
 &= \ \deg L \ + \ \sum_i \alpha_i \ , 
\end{align*}
where $m$ is the degree of the covering $f$. Finally, uniqueness of the 
vortex solution on $(L, s , \omega)$ follows from the uniqueness on $(f^\ast L, 
f^\ast s, f^\ast \omega)$.$\hfill{\Box}$

\begin{lemma}\label{le-e}
Theorem \ref{existence} is true when the weights $\beta_j$ are non-negative.
\end{lemma}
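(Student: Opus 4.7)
\textbf{Proof proposal for Lemma \ref{le-e}.}

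The plan is to reduce the vortex equation to a scalar semilinear elliptic PDE of Kazdan--Warner type, solve it by the direct method of the calculus of variations using the stability inequality \eqref{stability-c}, and then analyze the local behaviour of the solution at the singular points. First, I would fix a reference hermitian metric $h_0$ on $L$ that is smooth on $X \setminus \{p_1,\ldots,p_r\}$ and has, in a coordinate neighbourhood of each parabolic point $p_i$, the prescribed form $h_0 = e^{2u_0}|z|^{2\alpha_i}$ with $u_0$ smooth; such an $h_0$ is built by a standard partition-of-unity construction. Writing the unknown vortex metric as $h = h_0\,e^{-2u}$, the identities $F_h = F_{h_0} + 2\partial\bar\partial u$ and $|\phi|_h^2 = e^{-2u}|\phi|_{h_0}^2$ reduce the vortex equation \eqref{v-eq} to the scalar equation
\begin{equation*}
\Delta_\omega u \ = \ e^2 \tau \,-\, g \,-\, e^2\, |\phi|_{h_0}^2\, e^{-2u}\,,
\end{equation*}
where $g$ is defined off the singular set by $i F_{h_0} = g\,\omega_X$. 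Since every $\beta_j \geq 0$, the form $\omega$ is a continuous K\"ahler metric of finite volume on $X$, the operator $\Delta_\omega$ is only degenerately elliptic at the points $q_j$, and the weighted Sobolev space $W^{1,2}(X,\omega)$ enjoys the usual compact embeddings and a Moser--Trudinger inequality.

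Next I would analyze the strictly convex functional
\begin{equation*}
J(u) \ = \ \int_X \Bigl( \tfrac{1}{2}\,|du|^2_\omega \,+\, (e^2 \tau - g)\,u \,+\, \tfrac{e^2}{2}\,|\phi|_{h_0}^2\,e^{-2u} \Bigr)\,\omega_X\,,
\end{equation*}
whose Euler--Lagrange equation is precisely the PDE above. The parabolic Chern--Weil identity $\int_X g\,\omega_X = 2\pi(\deg L + \sum_i \alpha_i)$ shows that the linear part of $J$ in the mean value $\bar u = (\mathrm{Vol}_\omega X)^{-1}\int_X u\,\omega_X$ has coefficient $e^2 \tau\,\mathrm{Vol}_\omega X - 2\pi(\deg L + \sum_i \alpha_i)$, which is strictly positive thanks to the stability hypothesis \eqref{stability-c}. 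Decomposing $u = \bar u + v$ with $\int_X v\,\omega_X = 0$ and using Moser--Trudinger together with $|\phi|_{h_0}^2 > 0$ on a dense open set, one shows that $J \to +\infty$ as $\bar u \to +\infty$ (dominated by the linear term), as $\bar u \to -\infty$ (dominated by the exponential term), and as $\|dv\|_{L^2} \to \infty$. Coercivity plus weak lower semicontinuity yields a minimizer, and strict convexity of $u \mapsto e^{-2u}$ gives its uniqueness.

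The main obstacle is the regularity of this minimizer at the singular points. Away from $\{p_i\} \cup \{q_j\}$ the equation is a standard smooth semilinear elliptic equation, so bootstrapping from $u \in W^{1,2}$ yields $u \in C^\infty$ and $h = h_0 e^{-2u}$ is a classical vortex solution there. Near a singular point $z_0$ with weights $(\alpha,\beta)$, in a conformal coordinate the operator $\Delta_\omega$ equals $|z|^{-2\beta}\partial_z \partial_{\bar z}$ up to a smooth nonvanishing factor, and $|\phi|_{h_0}^2$ has the form $|z|^{2\alpha}\cdot(\text{smooth nonzero})$, so the equation schematically becomes $\partial_z \partial_{\bar z} u \,=\, a(z)\,|z|^{2\beta} \,+\, b(z)\,|z|^{2(\alpha+\beta)}$ with $a,b$ smooth and $u$ bounded. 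Iterated use of the $\bar\partial$-Poincar\'e lemma to integrate this equation, together with an elementary count of the powers of $|z|$ appearing in the resulting expansion of $u$, yields the claimed regularity $C^{k(\alpha,\beta)}$ of \eqref{definition_k}. Finally, the Chern number formula \eqref{chern-formula} follows from Stokes' theorem applied on $X \setminus \bigcup_i B_\varepsilon(p_i)$ as $\varepsilon \to 0$: the bulk integral contributes $-2\pi\sqrt{-1}\deg L$ and the boundary contributions from the logarithmic piece \eqref{singular-connection} at each $p_i$ add $-2\pi\sqrt{-1}\,\alpha_i$.
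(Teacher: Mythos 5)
Your proposal is correct in substance and follows the same basic strategy as the paper --- reduce the vortex equation to a scalar Kazdan--Warner equation and exploit the sign of $\int K_1$ coming from \eqref{stability-c} --- but it diverges in two places worth comparing. First, the paper does \emph{not} work with the degenerate Laplacian $\Delta_\omega$: it fixes a smooth background K\"ahler form $\omega_0$ of the same volume, writes $\omega=\rho\,\omega_0$, and absorbs the degeneracy into the coefficients $K=-e^2\rho\,|s|^2_{h_0}$ and $K_1=\sqrt{-1}\,\Lambda_0F_{h_0}-e^2\tau\rho$ of the equation $\Delta_0u=Ke^{2u}-K_1$ on the \emph{smooth} compact surface $(X,\omega_0)$. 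This lets it quote the existence, uniqueness and Schauder-type regularity statements of Kazdan--Warner verbatim. Your route instead solves the equation for $\Delta_\omega$ by the direct method, and here you assert without proof that the weighted space $W^{1,2}(X,\omega)$ ``enjoys the usual compact embeddings and a Moser--Trudinger inequality''; this is the one soft spot in your argument. It is repairable precisely by the paper's observation: in complex dimension one the Dirichlet energy is conformally invariant and $\rho$ is bounded (since all $\beta_j\ge0$), so your function spaces and inequalities reduce to the standard ones for $\omega_0$ --- but you should say so, since for a genuinely degenerate operator these facts are not automatic. (Your coercivity argument also implicitly uses Jensen's inequality on $\int|\phi|^2_{h_0}e^{-2v}$, which requires $\log|\phi|^2_{h_0}\in L^1$; this holds because $\phi$ has isolated zeros and the parabolic weights are finite, and is worth a line.) Second, for the local regularity at a singular point the paper again leans on Kazdan--Warner: if $K$ and $K_1$ are $C^k$ at a point then $u$ is $C^{2+k}$ there, combined with the observation that $\rho$ is $C^{\nu(\beta)}$ and $\rho|s|^2_{h_0}$ is $C^{\nu(\alpha+\beta)}$; your $\bar\partial$-Poincar\'e power-counting bootstrap is a more hands-on version of the same computation and, carried out carefully (remembering that the coefficient multiplying $|z|^{2(\alpha+\beta)}$ involves $e^{-2u}$ and so must itself be bootstrapped), yields the same exponent \eqref{definition_k}. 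The two treatments of the Chern integral \eqref{chern-formula} are equivalent: the paper integrates $\partial\bar\partial u$ over all of $X$ using $u\in C^2$, while you excise disks and track the logarithmic boundary terms; both give $-2\pi\sqrt{-1}\,(\deg L+\sum_i\alpha_i)$.
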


\begin{proof}
Let $h_0$ be a background hermitian metric on $L$ representing the parabolic data $\sum \alpha_i \, p_i$. Any other hermitian metric with the same parabolic data can be written as $h= e^{2u} h_0$ for some real function $u$ on $X$. Let $\omega_0$ be a background K\"ahler metric on $X$ with no singularities or degeneracies and the same volume as $\omega$. One can write $\omega \,=\, \rho \, \omega_0$, where $\rho$ is a positive function representing the divisor $\sum \beta_j \, q_j$ in the sense of \eqref{oU}. As originally observed by Bradlow in \cite{Brad}, the vortex equation \eqref{v-eq} for $h$ is then equivalent to
\begin{equation} \label{kw-eq}
\Delta_0 u \ = \ K e^{2u} \ - \ K_1 \ ,
\end{equation}
where $\Delta_0$ is the positive-definite Laplacian of the metric $\omega_0$ 
and the coefficients are $$K \,=\, - e^2\, \rho \,|s|^2_{h_0} ~\, ~ \text{ and } ~\,~ K_1 \,= \, \sqrt{-1} \cdot 
\Lambda_0 F_{h_0} - e^2 \tau \, \rho\, .$$
Here $\Lambda_0 F_{h_0}$ denotes the contraction of the curvature $ F_{h_0}$ with the metric $\omega_0$. The functions $K$ and $K_1$ are clearly $C^\infty$ away from the
points $\{q_j\}$ and \{$p_i\}$. At those points they are continuous, since the weights $\beta_j$ and $\alpha_j$ are non-negative. Moreover, these coefficient functions satisfy
\begin{align*}
&(i)\ K \leq 0 \ \ {\text{\rm and is strictly negative on a non-empty domain,}} \\
&(ii) \int_X K_1 \, \omega_0 \ = \ 2 \pi \Big(\deg L + \sum_i \alpha_i \Big) - e^2 \, \tau\, {\rm Vol}_\omega X \ < \ 0 \ .
\end{align*}
It then follows from the classical results of Kazdan and Warner (see in 
particular Corollary 10.13 and the proof leading to Theorem 10.5 in \cite{KW}) 
that \eqref{kw-eq} has a unique solution $u \in C^2 (X)$, and that this 
solution is $C^\infty$ on the domain $X \setminus \{\{q_j\} , \{p_i\}\}$. Moreover, if the functions $K$ and $K_1$ are $C^{k}$ at a given point, then $u$ will be $C^{2+ k}$ at that point. 
Thus formula \eqref{definition_k} in the case $\beta\,\geq\, 0$ follows from the
straightforward observation that if $\beta$ and $\alpha$ are the weights of $\rho$ and $h_0$, respectively, at a given singularity, then $\Lambda_0 F_{h_0}$ is $C^\infty$, while $\rho$ is $C^{ \nu (\beta)}$ and $\rho \,|s|^2_{h_0}$ is $C^{\nu(\alpha + \beta)}$ at the singularity. Finally, using that $h_0$ is a parabolic metric in the sense \eqref{par-assympt} with smooth coefficients $g_i$, and that $u$ is at least $C^2$ on the whole $X$, we can apply Stokes' theorem to obtain 
 \[
\int_X F_h \ = \ \int_X F_{h_0} - \partial \bar{\partial}u \ = \ \int_X 
F_{h_0} \ = \ - 2\pi i \, \big( \deg L + \sum_j \alpha_j \big) \ , 
\]
completing the proof. 
\end{proof}

\noindent
\textbf{Proof of Proposition \ref{energy}.}\, 
We start by proving the result in the case where the metric on the Riemann surface does not explode, meaning in the case where the weights satisfy $\beta_j \geq 0$. The standard Bogomolny argument for the Yang-Mills-Higgs functional \cite{Brad} allows us to rewrite 
\[
 \mathcal{E}(A, \phi) \ = \ \frac{1}{2 e^2} \, \Big\vert F_A - \sqrt{-1} e^2
\big(\vert \phi \vert^2 - \tau \big)\, \omega_X \Big\vert^2 \ + \
2 \big\vert \bar{\partial}_A \phi \big\vert^2 \ + \ \sqrt{-1} \tau F_A \ - \ \sqrt{-1} \, \dd \big(h\, \bar{\phi} \, \dd_A \phi \big) \ .
\]
When the vortex equations are satisfied, the first two terms vanish. By
Theorem \ref{existence}, the integral over $X$ of the third term is $2 \pi \tau \, \big( \deg L + \sum_j \alpha_j \big)$. So we only need to show that the integral over $X$ of the last term vanishes. In the smooth case this is standard and obviously true, by Stokes' theorem. When the connection $A_h$ has logarithmic singularities at the parabolic points, we can still localize the surface integral to a sum of line integrals over small circles $C^j_R$ around the parabolic points,
\[
\int_X \dd \big( h\, \bar{\phi} \, \dd_A \phi \big) \ = \ \sum_{p_j}\ \lim_{R \rightarrow 0}\ \int_{C^j_R} h\, \bar{\phi} \, \dd_A \phi\ .
\]
To evaluate the circle integrals, around each parabolic point take the local holomorphic trivialization of $L$ defined by $\phi \,=\, 1$. Since $A_h$ is the Chern connection in this trivialization,
we have $A_h \,=\, h^{-1} \partial h$, and so $h\, \bar{\phi} \, \dd_A \phi \,=\, \partial(h \bar{\phi} \phi)\, =\, \partial h$. It follows that, in this trivialization,
\begin{align*}
\int_{C_R} h\, \bar{\phi} \, \dd_A \phi \ &= \ \int_{0}^{2\pi} \Big( \frac{\sqrt{-1}R}{2} \,
\partial_R h \ + \frac{1}{2} \,\partial_\theta h\Big) \ \dd \theta \ = \ \frac{\sqrt{-1}R}{2} \, \int_{0}^{2\pi} \partial_R h \ \dd \theta\\
&= \ \sqrt{-1} \, \int_{0}^{2\pi} \Big( R^{2\alpha +1}\, e^{2u}\, \partial_R u \ + \ \alpha\, R^{2 \alpha} \, e^{2u} \Big)\ \dd \theta \ ;
\end{align*}
here we have used that near each parabolic point the hermitian metric can be written in the form $h(z) \,=\, e^{2u} |z|^{2\alpha}$, with $\alpha \geq 0$. But Theorem \ref{existence} says that when the weights $\beta_j$ are non-negative, the function $u$ is at least of class $C^2$, and in particular both $e^{2u}$ and $\partial_R u$ are continuous in a neighborhood of the parabolic point. Thus clearly
\[
\lim_{R \rightarrow 0}\ \int_{C_R} h\, \bar{\phi} \, \dd_A \phi \ = \ 0 \ ,
\] 
completing the proof in the case $\beta_j \,\geq\, 0$.

The proof of the proposition in the general case $\beta_j\, >\, -1$ can be reduced to the case
proved above by the method employed in the proof of Theorem \ref{existence}. Namely, we take the Galois covering map $f\,:\, Y\, \longrightarrow \,X$
as in that proof and consider the vortex equations on the line bundle $f^\ast L$ over the Riemann surface with hermitian structure $(Y, f^\ast \omega_X)$ equipped with the holomorphic section $f^\ast \phi$. By construction all the singular points of the metric $f^\ast \omega_X$ have non-negative weights, so from the first part of the proof of the proposition we know that all vortex solutions on $(f^\ast L, Y, f^\ast \omega_X)$ have energy 
\[
2 \pi \tau \, \text{par-deg} \,( f^\ast L) \ = \ 2 \pi \tau\, m \, \text{par-deg}\, (L)\, ,
\]
where $m$ is the number of sheets of the cover (the degree of $f$). At the same time, from the proof of Theorem \ref{existence} we know that the vortex solution for $(f^\ast L , f^\ast \phi, Y, f^\ast \omega_X)$ is invariant under the action of the Galois group $\text{Gal}(f)$, that it descends to the vortex solution for $( L, \phi, X, \omega_X)$, and that all the vortex solutions downstairs on $X$ can be obtained in this way. So for any vortex solution on $X$ we have
\begin{align*}
E _X (A , \phi) \ &= \ \int_X \mathcal{E}(A, \phi) \ \omega_X \ = \ m^{-1}\, \int_Y f^\ast \, \mathcal{E}( A, \phi) \ f^\ast \omega_X \\
&= \ m^{-1}\, E _Y (f^\ast A , f^\ast \phi)\ = \ 2 \pi \tau \, \text{par-deg}\, (L) \, ,
\end{align*}
as desired.
$\hfill{\Box}$

\section{K\"ahler metric on moduli space of singular abelian vortices}

In this section we will check that the $L^2$ metric on the vortex moduli space is well defined even when singularities are allowed, that is, when we allow the hermitian metric on the line bundle to be degenerate at the parabolic points and the K\"ahler metric on the surface $X$ to have point degeneracies or conical singularities.

Take a parabolic abelian pair $\underline{y}\, :=\, (L\, 
,s)\,\in\,
\text{Sym}^d(X)$. The hermitian structure on $L$ given by
Theorem \ref{existence} for the pair $(L\, ,s)$ will be denoted 
by $h_{\underline{y}}$. 
The Chern connection on $L$ over $X'\, :=\, X\setminus\{p_1\, ,
\cdots\, ,p_r\}$ for $h_{\underline{y}}$ will be denoted by 
$\nabla^{\underline{y}}$. For any $\epsilon\,> \,0$, define
$$
D_\epsilon\, :=\,\{t\,\in\, {\mathbb C}\,\mid\,~ |t|\,< \, \epsilon\}\, .
$$
Take a holomorphic family of parabolic 
abelian pairs parametrized by $D_\epsilon$, where $\epsilon$ is
sufficiently small, that deforms the given pair $(L\, ,s)$. This
means that we have a holomorphic line bundle ${\mathcal L}$
over $X\times D_\epsilon$, together with a holomorphic section 
$S$ of ${\mathcal L}$, such that ${\mathcal 
L}\vert_{X\times \{0\}} \,=\, L$ and $S\vert_{X\times \{0\}}
\,=\, s$. Let
\begin{equation}\label{v}
v\, \in\, T_{\underline{y}} \text{Sym}^d(X)
\end{equation}
 be the holomorphic tangent vector given by
this family. For any $t\, \in\, D_\epsilon$, let $h_t$ be the 
hermitian structure on ${\mathcal L}\vert_{X'\times \{t\}}$ 
given by Theorem \ref{existence} for the pair $({\mathcal 
L}\vert_{X\times \{t\}}\, ,S\vert_{X\times \{t\}})$.

The hermitian structures $h_t$, $t\,\in\, D_{\epsilon}$, together define 
a hermitian structure $h$ on ${\mathcal L}\vert_{X'\times D_\epsilon}$.
Consider the Chern connection on ${\mathcal 
L}\vert_{X'\times D_\epsilon}$ for this hermitian structure $h$.
For any point $x\, \in\, X'$, taking parallel translations of 
the fiber ${\mathcal L}_{(x,0)}$ along the radial lines in the
disk $\{x\}\times D_\epsilon$, we get an identification of 
${\mathcal L}_{(x,0)}$
with ${\mathcal L}_{(x,t)}$ for each $t\, \in\, D_\epsilon$. In this
way, a $C^\infty$ isomorphism between the hermitian line bundles
${\mathcal L}\vert_{X'\times \{t\}}$ and
${\mathcal L}\vert_{X'\times \{0\}}$ is obtained.

The Chern connection on ${\mathcal L}\vert_{X'\times \{t\}}$ 
for $h_t$ will be denoted by $\nabla^t$. So,
$\nabla^0\,=\, \nabla^{\underline{y}}$. Using the above 
identification between ${\mathcal L}\vert_{X'\times \{t\}}$ 
and ${\mathcal L}\vert_{X'\times \{0\}}$, we have
\begin{equation}\label{le}
\nabla^t\,=\, \nabla^{\underline{y}} +\theta_t~\text{~and~}~ 
S\vert_{X\times \{t\}}\,=\, s +\sigma_t\, ,
\end{equation}
where $\theta_t$ is a smooth $1$--form on $X'$ 
and $\sigma_t$ is a smooth section of ${\mathcal 
L}\vert_{X'\times \{0\}}\,=\, L$ over $X'$.

Then define the norm
\begin{equation}\label{kf}
\Vert v\Vert^2\, :=\, \frac{1}{2} \int_X 
(\frac{d\theta_t}{dt}(0))\wedge \star (\frac{d\theta_t}{dt}(0)) + 
\int_X \Vert \frac{d\sigma_t}{dt}(0)\Vert^2\cdot\Omega_X\, ,
\end{equation}
where $\theta_t$ and $\sigma_t$ are constructed in \eqref{le},
and $v$ is the tangent vector in \eqref{v}.
To complete the definition, we need to show that the integrals in 
\eqref{kf} are finite. Note that both the $(1\, ,1)$--forms 
on $X'$ that are being integrated in \eqref{kf} are positive.

\begin{lemma}\label{lem4}
Both the $(1\, ,1)$--forms $(\frac{d\theta_t}{dt}(0))\wedge \star 
(\frac{d\theta_t}{dt}(0)) $ and $\Vert \frac{d\sigma_t}{dt}(0) 
\Vert^2\cdot\Omega_X$ over $X'$ in \eqref{kf} have a finite integral.
\end{lemma}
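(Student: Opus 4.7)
The plan is to localize to small disks around the singular points $\{p_i\}\cup\{q_j\}$, since off this finite set all the data are smooth and both $(1,1)$-forms are bounded; both integrals are manifestly finite on compact subsets of $X'$ avoiding the singularities. I would first handle the case in which the singular point has $\beta\geq 0$, and then reduce the general case to this one via the Galois cover $f:Y\to X$ constructed in the proof of Theorem \ref{existence}.

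For a singular point where $\beta\geq 0$: by Theorem \ref{existence}, in a local holomorphic frame one has $h_t = e^{2u_t}|z|^{2\alpha}$ with $u_t\in C^{k(\alpha,\beta)}\subset C^2$, and the parabolic weight $\alpha$ is $t$-independent since the parabolic data is fixed along the deformation. Writing the Chern connection as $h_t^{-1}\partial h_t = 2\,\partial u_t + \alpha\,dz/z$, the singular term $\alpha\,dz/z$ cancels out when comparing $\nabla^t$ with $\nabla^{\underline{y}}$, and after accounting for the smooth gauge transformation induced by radial $t$-parallel transport (whose generator is the regular $t$-component $2\,\partial_t u_t$ of the Chern connection of $h$), one finds $\dot\theta = 2\,\partial\dot u + (\text{smooth})$ near the singular point. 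To control $\dot u$, I would differentiate the Kazdan--Warner equation \eqref{kw-eq} in $t$ at $t=0$ to obtain a linear elliptic equation
\[
(\Delta_0 - 2Ke^{2u})\,\dot u \;=\; \dot K\,e^{2u}\,-\,\dot K_1,
\]
whose coefficients and right-hand side are continuous at the singularity when $\beta\geq 0$. Standard elliptic regularity then gives $\dot u\in C^2$, so $\partial\dot u$ is bounded near $z_0$. As the Hodge star on 1-forms over a Riemann surface is conformally invariant, the pointwise expression $\dot\theta\wedge\star\dot\theta$ carries no factor of $|z|^{2\beta}$, and the integrand is dominated by a bounded function times $\tfrac{i}{2}\,dz\wedge d\bar z$, which is integrable on the disk. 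For the second integral, $\sigma_t$ is smooth in $(z,t)$ in a holomorphic frame of $L$ (the $t$-parallel transport is generated by the regular $t$-component of the connection of $h$), so $\dot\sigma$ is bounded in the frame, and
\[
\bigl\|\dot\sigma\bigr\|_h^2\,\omega_X \;\sim\; |\dot\sigma|^2\,e^{2u}\,|z|^{2(\alpha+\beta)}\,\tfrac{i}{2}\,dz\wedge d\bar z,
\]
which is locally integrable because $\alpha\geq 0$ and $\beta>-1$ give $\alpha+\beta>-1$.

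To remove the assumption $\beta\geq 0$, I would reuse the finite Galois cover $f:Y\to X$ from the proof of Theorem \ref{existence}, on which every metric weight becomes $\geq 0$. By uniqueness, $f^*h_t$ solves the pulled-back vortex equation, so the corresponding data on $Y$ satisfy $\theta'_t = f^*\theta_t$ and $\sigma'_t = f^*\sigma_t$ by functoriality of the Chern connection, and the standard change-of-variables formula for a degree-$m$ covering yields
\[
\int_X \dot\theta\wedge\star\dot\theta \;=\; \frac{1}{m}\int_Y \dot\theta'\wedge\star\dot\theta', \qquad \int_X\bigl\|\dot\sigma\bigr\|^2\,\omega_X \;=\; \frac{1}{m}\int_Y\bigl\|\dot\sigma'\bigr\|_{f^*h}^2\,f^*\omega_X,
\]
reducing finiteness downstairs to the $\beta\geq 0$ case just handled upstairs. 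The main technical obstacle is the regularity input for $\dot u$: one must argue carefully that the linearized Kazdan--Warner equation produces a solution with two continuous derivatives at the singular point (using the continuity of $K$, $K_1$ and of their $t$-derivatives when $\beta\geq 0$), and one must check that the singular term $\alpha\,dz/z$ in the Chern connection is genuinely $t$-independent and thus contributes nothing to $\dot\theta$.
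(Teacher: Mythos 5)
Your proposal is correct, and its second half --- pulling everything back along the Galois cover $f\colon Y\to X$, using uniqueness of vortex solutions to identify $f^{*}h_t$ with the solution upstairs so that the pulled-back deformation data are $f^{*}\theta_t$ and $f^{*}\sigma_t$, and dividing by the degree $m$ of the cover --- is exactly the paper's proof, which reduces finiteness on $X$ to finiteness of the corresponding integrals on $Y$ as in \eqref{fi3}. Where you genuinely differ is that the paper simply asserts that the integrals on $Y$ are finite, whereas you supply a proof of the base case $\beta\ge 0$: you note that the $t$-independent singular term $\alpha\,dz/z$ cancels in $\theta_t=\nabla^t-\nabla^{\underline{y}}$, control $\dot u$ by linearizing the Kazdan--Warner equation \eqref{kw-eq} (the operator $\Delta_0-2Ke^{2u}$ is invertible since $K\le 0$, which also justifies, via the implicit function theorem, the differentiability of $t\mapsto u_t$ that you use), invoke the conformal invariance of the Hodge star on $1$-forms to remove any $|z|^{2\beta}$ factor from $\dot\theta\wedge\star\dot\theta$, and check that $|z|^{2(\alpha+\beta)}$ is locally integrable since $\alpha+\beta>-1$. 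This buys a self-contained argument where the paper leans on the smooth-case machinery of the references. One small imprecision: a merely continuous right-hand side in the linearized equation yields $\dot u\in C^{1,\gamma}$ by elliptic regularity, not $C^{2}$ in general; but boundedness of $\partial\dot u$ is all your estimate actually needs, so the conclusion is unaffected.
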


\begin{proof}
Take any parabolic abelian pair $\underline{y}\, :=\, (L\, ,s)$ 
over $X$. As before, let $v\, \in\, T_{\underline{y}}
\text{Sym}^d(X)$ be the holomorphic tangent vector given by a
holomorphic family of parabolic abelian pairs $({\mathcal L}\, 
,S)$ parametrized by $D_\epsilon$. Let $\theta_t$ and
$\sigma_t$ be as in \eqref{le}.

Consider the covering $Y$ in \eqref{f}. The abelian pair on $Y$
corresponding to $$({\mathcal L}\vert_{X\times \{t\}}\, 
,S\vert_{X\times \{t\}})$$ will be denoted by 
$(\widehat{L}_t\, ,\sigma_t)$. Let $\widehat{h}_t$ be the
hermitian structure on $\widehat{L}_t$ solving the vortex
equation for the pair $(\widehat{L}_t\, ,\sigma_t)$ with respect
to the K\"ahler form $\omega_Y$. Let
$\widehat{\nabla}^t$ be the Chern connection for
$\widehat{h}_t$. Write as before,
\begin{equation}\label{fi2}
\widehat{\nabla}^t\,=\, \widehat{\nabla}^0 
+\widehat{\theta}_t ~\text{~and~}~ \sigma_t\,=\, 
\sigma_0 + \widehat{\eta}_t\, .
\end{equation}

Recall that the K\"ahler form $\omega_Y$ on $Y$ is the extension 
of the pullback $(f^*\omega_X)\vert_{f^{-1}(X')}$, where $f$ is the 
covering map in \eqref{f}. Also recall that the section $\sigma_t$ is 
the pullback of $S\vert_{X\times \{t\}}$. The pullback, over $f^{-1}(X')$, 
of the hermitian structure $h_t$ on ${\mathcal L}\vert_{X'\times 
\{t\}}$ extends to a hermitian structure on 
$\widehat{L}_t$ over $Y$, and this extension satisfies the 
vortex equation for $(\widehat{L}_t\, ,\sigma_t)$. Therefore, 
we conclude that
\begin{equation}\label{fi}
\widehat{\theta}_t\, =\, f^*\theta_t ~\text{~and~}~ 
\widehat{\eta}_t\,=\, f^*\sigma_t
\end{equation}
over $X'$, where $\widehat{\theta}_t$ and $\widehat{\eta}_t$
(respectively, $\theta_t$ and $\sigma_t$)
are constructed in \eqref{fi2} (respectively, \eqref{le}).
From \eqref{fi} we conclude that
\begin{equation}\label{fi3}
\frac{1}{2} \int_Y(\frac{d\widehat{\theta}_t}{dt}(0))
\wedge \star (\frac{d\widehat{\theta_t}}{dt}(0)) + \int_Y \Vert 
\frac{d\widehat{\eta}_t}{dt}(0)\Vert^2\cdot\Omega_Y
\end{equation}
$$
=\, 
\frac{1}{\#{\rm Gal}(f)}(\frac{1}{2} \int_X
(\frac{d\theta_t}{dt}(0))\wedge \star (\frac{d\theta_t}{dt}(0))
+ \int_X \Vert \frac{d\sigma_t}{dt}(0)\Vert^2\cdot\Omega_X)\, .
$$
The right--hand side of \eqref{fi3} is a finite number because the 
left--hand side of \eqref{fi3} is a finite number. Since the 
right--hand side of 
\eqref{fi3} coincides with the right--hand side of \eqref{kf}, 
the lemma follows.
\end{proof}

\section{K\"ahler class, volume and total scalar curvature}

Let $Z$ be a compact connected Riemann surface equipped with
a smooth and non-degenerate K\"ahler metric.
The cohomology class of the K\"ahler form $\omega_{\rm vort}$ of the $L^2$--metric 
on the vortex moduli space $M={\rm Sym}^d(Z)$ is explicitly known 
\cite{MN, Perutz, Ba}. It can be written as a linear combination
\begin{equation}\label{4.1}
[ \omega_{\rm vort}] \ = \ \pi\ \Big(\tau \, {\rm Vol}(Z) - \frac{2 
\pi}{e^2} d \Big) \, \eta \ + \ \frac{2 \pi^2}{e^2} \, \theta 
\end{equation}
of two natural cohomology classes $\eta$ and $\theta$ in $H^2 ({\rm 
Sym}^d (Z),\, {\mathbb R})$; their definitions are recalled below. In this 
section we will see that when the vortices are defined on parabolic line 
bundles, instead of regular line bundles, or when the metric on the Riemann
surface has conical singularities, the expression for the K\"ahler class is a
suitable modification of \eqref{4.1}.

\begin{theorem}\label{prop3}
Let $X$ be a compact Riemann surface equipped with a K\"ahler metric $\omega$ 
with a finite number of singularities, as in \eqref{oU}, and assume that the 
weights at these singularities satisfy $\beta > -1$. Let $L \longrightarrow X$ 
be a holomorphic line bundle equipped with parabolic data $\sum \alpha_i \, 
p_i$, 
where $\alpha_i \geq 0$. Then the K\"ahler class of the natural $L^2$-metric 
on the moduli space ${\rm Sym}^d(X)$ of $d$ vortices on $L \longrightarrow (X, 
\omega)$ is
\begin{equation}\label{1-v-Kclass}
[ \omega^d_{\text{\rm s-vort}}] \ = \ \pi\ \Big(\tau \, {\rm 
Vol}\, X
- \frac{2 \pi}{e^2}\, (d + \sum^r_{i=1} \alpha_i ) \Big) \, \eta \ + \ \frac{2 \pi^2}{e^2} \, 
\theta\, .
\end{equation}
\end{theorem}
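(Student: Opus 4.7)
The plan is to adapt the universal-bundle computation of $[\omega_{\rm vort}]$ from the smooth case (\cite{MN}, \cite{Perutz}, \cite{Ba}) to the singular setting, and to reduce the case of some $\beta_j < 0$ to the case of all non-negative weights via the Galois covering introduced in the proof of Theorem \ref{existence}, exactly as in the proof of Proposition \ref{energy}.

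First I would dispatch the case where some $\beta_j$ is negative. Take the Galois cover $f\colon Y \longrightarrow X$ of Theorem \ref{existence}, for which the pullback metric $f^*\omega_X$ has all weights non-negative. The assignment $D \mapsto f^*D$ defines a holomorphic embedding $\iota\colon {\rm Sym}^d(X) \hookrightarrow {\rm Sym}^{dm}(Y)$ onto the ${\rm Gal}(f)$-invariant locus, where $m = \#\,{\rm Gal}(f)$. By equation \eqref{fi3} the two $L^2$-K\"ahler forms satisfy $\iota^* \omega^{dm}_{Y,\text{\rm s-vort}} = m \cdot \omega^d_{X,\text{\rm s-vort}}$. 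Identifying $\iota^*\eta_Y = \eta_X$ (checked by pulling back the divisor $D_q \subset {\rm Sym}^{dm}(Y)$ for a generic $q \in Y$ with $f(q)=p$, whose preimage under $\iota$ is the divisor $D_p \subset {\rm Sym}^d(X)$) together with an analogous description for $\theta$ via the induced map on Jacobians, and combining these with the relations ${\rm Vol}(Y) = m\cdot{\rm Vol}(X)$ and $\text{\rm par-deg}(f^*L) = m\cdot\text{\rm par-deg}(L)$, reduces the claim on $X$ to the corresponding claim on $Y$, where all metric weights are non-negative.

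Second, assuming $\beta_j \geq 0$ for all $j$, I would imitate the universal-bundle derivation of \eqref{4.1} from the smooth case. On $X \times M$ one has the universal line bundle $\mathcal{L}$ with its universal Chern connection $\mathcal{A}$ and universal section $\Phi$. The $L^2$-K\"ahler form \eqref{vortexmetric} is expressed as an integral over $X$ of a $2$-form on $M$ built from the bi-type components of $F_{\mathcal{A}}$ and of the covariant derivative of $\Phi$. Applying the vortex equation \eqref{v-eq} --- in particular its integrated consequence $\int_X(|\phi|^2_h - \tau)\,\omega_X = -\frac{2\pi}{e^2}\,\text{\rm par-deg}(L)$, which follows directly from the Chern formula \eqref{chern-formula} --- together with integration by parts on $X$ rewrites $[\omega^d_{\text{\rm s-vort}}]$ as a linear combination of $\eta$ and $\theta$ on $M$. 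The $\tau$-term contributes $\pi\tau\,{\rm Vol}(X)\,\eta$, the $|\phi|^2_h$ piece contributes $-\frac{2\pi^2}{e^2}(d + \sum_i \alpha_i)\,\eta$, and the universal-curvature-squared contribution produces $\frac{2\pi^2}{e^2}\,\theta$, matching \eqref{1-v-Kclass}.

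The principal technical difficulty is justifying the Stokes-theorem manipulations in the presence of the logarithmic singularities of the Chern connection at the parabolic points $p_i$ (see \eqref{singular-connection}) and the degeneracies of $\omega_X$ at the points $q_j$. For $\beta_j \geq 0$, the regularity $u \in C^{k(\alpha,\beta)}$ with $k \geq 2$ from Theorem \ref{existence} ensures that the boundary integrals on small circles around the $p_i$ and $q_j$ tend to zero in the limit, by precisely the circle-integral estimate carried out in the proof of Proposition \ref{energy}. Carrying out this boundary analysis inside each of the form identities of the universal-bundle computation is the bulk of the work; once these terms are controlled, the proof reduces to exactly the same topological algebra as in the smooth case, with $\deg L$ replaced everywhere by $\text{\rm par-deg}(L) = d + \sum_i \alpha_i$ and ${\rm Vol}\,Z$ replaced by ${\rm Vol}\,X$.
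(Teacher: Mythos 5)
Your proposal follows essentially the same route as the paper's proof: a universal-bundle computation for the case of non-negative weights (where the parabolic weights enter through the discrepancy between $[F_H]$ and $-2\pi\sqrt{-1}\,c_1(\mathcal{L})$, equivalently through the fiberwise Chern integral \eqref{chern-formula}), followed by the Galois-cover reduction using $\widetilde{f}^{\ast}\omega^{md}_{Y} = m\,\omega^{d}_{X}$ and $\widetilde{f}^{\ast}\eta' = \eta$. The only point left implicit in your sketch that must be made precise is the theta class: the reduction closes only because $\widetilde{f}^{\ast}\theta' = m\,\theta$ rather than $\theta$, which the paper establishes via the commutative diagram with $\mathrm{Pic}^d(X)\longrightarrow \mathrm{Pic}^{md}(Y)$ and the fact that $f_{\ast}f^{\ast}$ acts as multiplication by $m$ on $H^1(X,\mathbb{R})$.
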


\noindent
Thus the only difference with the nonsingular case considered
in \cite{Ba} is that the usual degree is replaced 
by the parabolic degree. Equivalently, the volume of the surface 
$X$ is replaced by ${\rm Vol}\, X - \sum_{i} 2 \pi \alpha_i / (e^2 
\tau)$. In particular, an intersection calculation on $H^\ast 
({\rm Sym}^d (X), \, {\mathbb R})$ similar to the one performed 
in \cite{MN, Ba} for regular vortices gives the total volume and the 
total scalar curvature of the moduli space of parabolic vortices.
These numbers are the following:
\begin{align*}
{\rm Vol}\, M_{\text{s-vort}}^d \ &= \ \int_{{\rm Sym}^d (X)} 
\ \omega_{\text{\rm s-vort}}^d \ / \ d! \\
&=\ \pi^d \, \sum_{i=0}^{{\rm min}\{ g, d \}} \, \frac{g!\: }{i! \: (d-i)! \: (g-i)!} \ (4\pi)^i \, \Big(\tau \, 
\Big( {\rm Vol}\, X \, - \, \frac{2 \pi}{e^2 \tau} \sum_{j=1}^r \alpha_j \Big) \, -\, \frac{2 \pi}{e^2}\, d 
\Big)^{d-i}\, \nonumber
\end{align*}
for the volume, and for the total scalar curvature is
\begin{align*}
& \int_{{\rm Sym}^d (X)} s(\omega_{\text{s-vort}}) \ \omega_{\text{s-vort}}^d \, / \, d! \ = \\
&=\ (2\pi)^d \, \sum_{i=0}^{{\rm min}\{ g, d \}} \ \frac{g!\: \big( d+1 - 2g +i \big)}{i! \: (d -1-i)! \: (g -i)!} \ 
(2\pi)^i \ \Big[ \frac{\tau}{2}\Big( {\rm Vol}\, X \, - \, \frac{2 \pi}{e^2\tau}\, \sum_{j=1}^r \alpha_j\Big) -
\frac{\pi}{e^2}\, d \Big]^{d-1-i}\, \nonumber
\end{align*}
Observe that these formulae reduce to the usual results for regular vortices when the weights $\alpha_i$ of the parabolic singularities vanish.

\begin{remark}
One usually thinks of abelian vortices as 
finite-size objects, since the energy density of the vortex 
solutions is concentrated on ``effective disks" of area $2
\pi /(e^2\tau)$ centered around the zeros of the section 
$\phi$. This physical image is supported by the term ${\rm Vol}\, X - 2\pi d /(e^2\tau) $ that appears in the Bradlow condition \eqref{stability} and in the usual formulae for the volume of the moduli space of $d$ vortices. In the parabolic case, the corresponding term is ${\rm Vol}\, X \,-\, 2\pi (d + \sum_i \alpha_i)/(e^2 \tau)$, so we are lead to the heuristic interpretation that each parabolic singularity occupies a disk on the surface $X$ of effective area $2\pi \alpha_i / (e^2 \tau)$.
To study the statistical-mechanical properties of a ``gas" of 
vortices \cite{MN}, the relevant quantity is the asymptotic behavior of ${\rm 
Vol}\, M_{\text{vort}}^d (X)$ in the thermodynamical limit 
where $d\rightarrow \infty$ and ${\rm Vol} (X) \rightarrow 
\infty$ with constant density $d / {\rm 
Vol}\, X$. When singularities are present, everything works as in the 
regular case with the volume of $X$ reduced by $\sum_{i=1}^r 
2 \pi \alpha_i /(e^2 \tau)$.
\end{remark}

\begin{remark}
The localization technique in topological field theory developed by Moore, Nekrasov and Shatashvili in \cite{MNS} has been recently applied to evaluate and predict volumes of vortex moduli spaces \cite{MOS}. It would be interesting to see if those techniques can be used in the case of parabolic vortices and punctured Riemann surfaces.
\end{remark}

Before proving Theorem \ref{prop3}, we recall the definition of the 
cohomology classes $\theta$ 
and $\eta$ lying in $H^2 ({\rm Sym}^d (X), \mathbb{R})$. 
The class denoted by $\eta$ is
the Poincar\'e dual of the image of ${\rm Sym}^{d-1}(X)$
in ${\rm Sym}^d(X)$ by the embedding that sends any $\{x_1\, ,\cdots\, 
,x_{d-1}\}$ to $\{x_0\, ,x_1\, ,\cdots\, ,x_{d-1}\}$, where
$x_0\, \in\, X$ is a fixed point (for convenience, we take
${\rm Sym}^0(X)$ to be a point); so the
image of ${\rm Sym}^{d-1}(X)$ is the subvariety of ${\rm Sym}^d(X)$
parametrizing all $d$--tuples that contain the fixed point $x_0$.
This integral cohomology class $\eta$ is clearly independent of the choice of
the base point $x_0$.

To define the cohomology class
$\theta$, let $\text{Pic}^d(X)$ be the component of 
the Picard group of $X$ parametrizing all isomorphism classes of 
holomorphic line bundles over $X$ of degree $d$. The variety 
$\text{Pic}^d(X)$ is isomorphic to the Jacobian of $X$. 
The cohomology group $H^2(\text{Pic}^d(X), \, {\mathbb R})$
is canonically identified with $\bigwedge^2 H^1(X,\, {\mathbb 
R})$. The anti--symmetric pairing on
$H^1(X, \, {\mathbb R})$ defined by
$$
\alpha\, ,\beta\,\longmapsto\,\int_X \alpha\wedge\beta
$$
defines a homomorphism $H^1(X, \, {\mathbb R})\,\longrightarrow\,
H^1(X, \, {\mathbb R})^* \,=\, H_1(X, \, {\mathbb R})$. This homomorphism
is an isomorphism because the above pairing is nondegenerate. The inverse
homomorphism $H^1(X, \, {\mathbb R})^*\,\longrightarrow\, H^1(X, \, {\mathbb R})$
produces an element of $\bigwedge^2 H^1(X, \, {\mathbb R})$. Let
$$
\theta_{\text{Pic}^d(X)}\, \in\,
H^2(\text{Pic}^d(X),\, {\mathbb R})
$$
be the element corresponding to it by the
above identification between $\bigwedge^2 H^1(X, \, {\mathbb 
R})$ and $H^2(\text{Pic}^d(X), \, {\mathbb R})$. This element is 
usually called the theta class of ${\text{Pic}^d(X)}$. Now let
\begin{equation}\label{gxd}
\gamma^{}_{X,d}\, :\, {\rm Sym}^d(X)\, 
\longrightarrow\,\text{Pic}^d(X)
\end{equation}
be the morphism that sends any $\{x_1\, ,\cdots\, ,x_d\}$ to
the holomorphic line bundle ${\mathcal O}_X(\sum_{i=1}^d x_i)$.
For convenience, we define define $\gamma^{}_{X,0}({\rm 
Sym}^0(X))$ to be the
trivial line bundle ${\mathcal O}_X$. Then by definition
\begin{equation}\label{dth}
\theta\, :=\, \gamma^*_{X,d}\, \theta_{\text{Pic}^d(X)}\,\in\,
H^2({\rm Sym}^d(X),\, {\mathbb R})\, . 
\end{equation}
Theorem \ref{prop3} will be proved by establishing a sequence of lemmas.

\begin{lemma}
Theorem \ref{prop3} is true when the K\"ahler form $\omega$ on $X$ is smooth, 
i.e., when the weights at the singularities are $\beta_j = 0$.
\end{lemma}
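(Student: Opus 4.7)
The plan is to adapt the derivation of the K\"ahler class from the smooth non-parabolic setting of \cite{Ba} to the present parabolic situation, and then to identify where the extra $\sum_i \alpha_i$ terms enter.

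First I would fix a smooth background hermitian metric $h_0$ on $L$ representing the parabolic data $\sum_i \alpha_i \, p_i$ in the sense of \eqref{par-assympt}, and parametrize every vortex solution as $h_D = e^{2 u_D} h_0$ with $D \in \text{Sym}^d(X)$, where $u_D$ solves the Kazdan--Warner equation \eqref{kw-eq}. Since $\beta_j = 0$ in this lemma, Theorem \ref{existence} provides $u_D \in C^2(X)$, so derivatives of $u_D$ and of the holomorphic section along a one-parameter family $D_t$ give legitimate representatives of the tangent vectors $\dot\theta$, $\dot\sigma$ appearing in \eqref{kf}.

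Next, following \cite{Ba}, I would apply Bogomolny-type manipulations together with the vortex equation \eqref{v-eq} to rewrite \eqref{kf} as an explicit closed $(1,1)$-form on $\text{Sym}^d(X)$. The calculation is formally identical to the smooth case, but each application of Stokes' theorem has to be carried out on the punctured surface $X' = X \setminus \{p_1,\ldots,p_r\}$. The $C^2$ regularity of $u_D$ together with the boundedness of $|\phi_D|^2_{h_D}$ near the parabolic points, exactly as in the proof of Proposition \ref{energy}, ensures that all the relevant boundary circle integrals around the $p_i$ converge and in fact vanish.

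To identify the coefficients of $\eta$ and $\theta$, I would use the same intermediate identities as in \cite{Ba}, in which the total integral $\int_{X'} F_{h_D}$ enters naturally. In the smooth non-parabolic case this integral equals $-2\pi \sqrt{-1} \deg L$; in the present situation formula \eqref{par-degree} gives $-2\pi\sqrt{-1}\,(\deg L + \sum_i \alpha_i)$, that is, $-2\pi\sqrt{-1}$ times the parabolic degree. This is precisely the modification that turns \eqref{4.1} into \eqref{1-v-Kclass}. The coefficient of $\theta$ is unchanged, because $\theta$ is pulled back via the map $\gamma_{X,d}$ of \eqref{gxd} from a class on $\text{Pic}^d(X)$ that does not see the hermitian structure of $L$.

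The main obstacle is the bookkeeping needed to check that every integration by parts used in \cite{Ba} still produces vanishing boundary terms in the parabolic setting, so that the only place where the parabolic weights enter is through the replacement $\int F_{h_D} = -2\pi\sqrt{-1}\cdot \text{par-deg}\, L$. As a consistency check I would use the Galois covering argument of Theorem \ref{existence}: for rational weights $\alpha_i$, pull back the vortex data to a branched cover $f\colon Y \longrightarrow X$ on which the parabolic structure disappears, apply \eqref{4.1} upstairs, and descend by dividing by $|\text{Gal}(f)|$; continuity of both sides of \eqref{1-v-Kclass} in the $\alpha_i$ then yields the general case.
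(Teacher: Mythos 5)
Your plan is correct in outline and it isolates the right mechanism: the parabolic weights can only enter through the total curvature, shifting $\deg L$ to $\deg L+\sum_i\alpha_i$ in the $\eta$ coefficient while leaving the $\theta$ coefficient untouched. The paper reaches the same conclusion by the same overall strategy (adapting \cite{Ba}), but it implements the key step differently and, arguably, more cleanly. Rather than re-running the analytic derivation solution by solution and checking that every Stokes boundary term at the punctures vanishes, the paper works with the universal triple $(\mathcal L,\mathcal S,H)$ on $M^d_{\text{s-vort}}\times X$, keeps the fiber-integral formula \eqref{metric_formula} verbatim, and then compares $H$ with the smooth metric $H_0=\rho^{-1}H$, where $\rho$ is a fixed function on $X$ representing $\sum_i\alpha_i p_i$. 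This yields the cohomological identity \eqref{univ_curvature}: $[F_H]=-2\pi\sqrt{-1}\,c_1(\mathcal L)-[\partial\bar\partial\log\rho]$, where the correction is pulled back from $X$ with total integral $2\pi\sqrt{-1}\sum_j\alpha_j$; substituting into \eqref{metric_formula} and comparing with \eqref{4.1} finishes the proof in one stroke. The advantage of the paper's device is that the entire effect of the parabolic degeneracy is packaged into a single class proportional to the fundamental class of $X$, so no case-by-case boundary analysis is needed; your route requires the $C^2$ regularity and vanishing-circle-integral arguments of Proposition \ref{energy} at each integration by parts, which is workable but heavier. One small caution about your phrasing: what enters the K\"ahler-class computation is the class of $F_H$ on the product $M\times X$, not merely the fiber integral $\int_X F_{h_D}$; it is only because the correction term is pulled back from $X$ (hence sees nothing of the moduli directions) that its sole invariant is that integral, and that it therefore cannot contribute to $\theta$. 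Your closing Galois-cover consistency check is sound but is essentially the paper's method for the later case of negative $\beta$ (Lemma \ref{lemma4}), not for this lemma.
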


\begin{proof}
In this setting the only singularities present are the parabolic singularities of the hermitian metric on the line bundle.
Recall from \cite{BG} and Section 2 that, as a complex manifold, the moduli space $M^d_{\text{s-vort}}$ vortices with parabolic singularities is isomorphic to ${\rm Sym}^d (X)$ -- the moduli space of the usual and regular vortices.
Using exactly the arguments as in \cite{Perutz, BS, Ba} for the case of 
regular vortices, one can construct a universal line bundle $\mathcal{L} 
\longrightarrow M^d_{\text{s-vort}} \times X$ for vortices with parabolic 
singularities. In fact, as a holomorphic bundle, $\mathcal{L}$ is
the same whether the solutions we consider have parabolic points or not. This
universal bundle also comes equipped with a natural holomorphic section
$\mathcal{S}$ and a natural hermitian metric $H$ with the property that, for
any equivalence class of vortex solutions $q \,=\, [L, s, h]\,\in\, M^d_{\text{s-vort}}$, the
restriction of $(\mathcal{L}, \mathcal{S}, H)$ to the one-dimensional complex
submanifold $\{ q \} \times X \simeq X
\, \subset\, M^d_{\text{s-vort}}\times X$ coincides with $(L, s, h)$. Since the hermitian metric $h$ has degeneracies at the parabolic points, so does $H$. As a function on the product $M^d_{\text{s-vort}} \times X$, we have that:
\[
|S|^2_H \, ([L, s, h], z) \ = \ |s|^2_h \, (z) \ .
\]
Calculations similar the ones performed in \cite{Perutz, BS, Ba} show that the K\"ahler form of the natural $L^2$-metric on the moduli space $M^d_{\text{s-vort}}$ satisfies
\begin{equation}\label{metric_formula}
\omega_{\text{s-vort}} \ = \ \int_X \frac{\sqrt{-1} \tau}{2} \: F_{H} \wedge\omega \ + \ \frac{1}{4 e^2}\: \: \, F_H \wedge F_H \ ,
\end{equation}
where $F_H$ stands for the curvature of the Chern connection on $(\mathcal{L}, H)$. In the parabolic case, however, the metric $H$ has degeneracies, so the Chern connection will not be smooth everywhere. In particular, the curvature $F_H$, even though continuous, does not necessarily represent the Chern class of $\mathcal{L}$.

Now let $\rho$ be a fixed smooth and non-negative function on $X$ that represents
the real divisor $\sum \alpha_i \, p_i$. Then $H_0 := \rho^{-1}
\cdot H$ is a hermitian metric on $\mathcal{L}$ with no degeneracies and
it is at least $C^2$. So 
\begin{equation}\label{univ_curvature}
[F_H] \ = \ [F_{H_0} + \bar{\partial} \partial \log (\rho)] \ = \ 
-2\pi\sqrt{-1}\cdot c_1 (\mathcal{L}) - [\partial \bar{\partial} \log(\rho)]\,\in\,
H^2(M^d_{\text{s-vort}}\times X,\, {\mathbb R})
\end{equation}
as a cohomology class of the base $M^d_{\text{s-vort}} \times X$. Since the function $\rho$ does not depend on the coordinates on the moduli space, the last term is proportional to the fundamental class of $X$. The proportionality constant is
\[
\int_X \partial \bar{\partial} \log(\rho) \ = \ \sum_j\, \lim_{R \rightarrow 0} 
\int_{\partial D_j^R} \partial_{\bar{z}} \log(\rho)\, \dd \bar{z} \ = \ \sum_j 
2\pi\sqrt{-1} \cdot \alpha_j \ ,
\] 
where we have used Stokes' theorem to express the surface integral as a sum of loop integrals around the boundaries of small disks $D_j^R$ of radius $R$ centered at the singular points $p_j \in X$. Then it follows from \eqref{metric_formula} and \eqref{univ_curvature} that
\begin{equation*}
[ \omega_{\text{\rm s-vort}}] \ = \ \int_X \pi \Big( \tau - \frac{2 \pi \sum_j \alpha_j}{e^2\, {\rm Vol}_\omega\, X} \Big)\, c_1 (\mathcal{L}) \wedge\omega \ - \ \frac{\pi^2}{e^2} \,
c_1 (\mathcal{L}) \wedge c_1 (\mathcal{L}) \ .
\end{equation*}
Thus the class $[\omega_{\text{\rm s-vort}}]$ is given by the same formula as the class $[\omega_{\text{\rm vort}}]$, except that one should make the substitution 
\[
\tau\ \longmapsto \ \tau - \sum_j \frac{2 \pi \, \alpha_j}{e^2\, {\rm Vol}_\omega\, X } \ .
\]
Comparing with \eqref{4.1}, this proves the lemma.
\end{proof}

\begin{lemma} \label{lemma2}
Formula \eqref{1-v-Kclass} is still valid when the K\"ahler form $\omega$ on $X$ has degeneracies represented by a real divisor $\sum_j \beta_j q_j$, with $\beta_j \geq 0$.
\end{lemma}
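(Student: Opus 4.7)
The plan is to run the proof of Lemma 4.1 essentially verbatim, observing that each of its ingredients remains valid when $\omega$ has point degeneracies with $\beta_j \geq 0$. First, I would construct the universal bundle $\mathcal{L} \longrightarrow M^d_{\text{s-vort}} \times X$ together with its universal section $\mathcal{S}$ and hermitian metric $H$, exactly as in the proof of Lemma 4.1. By Theorem \ref{existence}, on each slice $\{q\} \times X$ the restriction of $H$ is the unique vortex solution, of the form $e^{2u}|z|^{2\alpha}$ near the parabolic points and with $u$ at least of class $C^{2}$ near the K\"ahler-degenerate points (since there $\beta \geq 0$ forces $k(\alpha,\beta)\geq 2$). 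Hence $F_H$ is continuous on $X$ away from the parabolic locus and integrable across it, while $\omega$ remains a bounded continuous $(1,1)$-form on $X$.

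The first substantive step is to verify the universal-bundle identity
\begin{equation*}
\omega_{\text{s-vort}} \ = \ \int_X \frac{\sqrt{-1}\tau}{2}\, F_H \wedge \omega \ + \ \frac{1}{4e^2}\, F_H \wedge F_H\, ,
\end{equation*}
which is the direct analogue of \eqref{metric_formula}. In the smooth case this follows from a standard calculation using the vortex equation and the $L^2$ metric constructed in Section 3; here the only new concern is convergence of the fibre integrals, which is guaranteed by boundedness of $\omega$ together with continuity of $F_H$ off the parabolic points and integrability near them. The second step is the cohomological identity: choosing a smooth non-negative function $\rho$ on $X$ representing the divisor $\sum_i \alpha_i\, p_i$, one puts $H_0 := \rho^{-1} H$, which is a continuous metric on $\mathcal{L}$, and obtains $[F_H] = -2\pi\sqrt{-1}\, c_1(\mathcal{L}) - [\partial\bar\partial \log \rho]$ in $H^2(M^d_{\text{s-vort}} \times X,\,\mathbb{R})$. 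The Stokes computation $\int_X \partial\bar\partial \log \rho = 2\pi\sqrt{-1} \sum_j \alpha_j$ is unchanged because $\rho$ is smooth.

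Pushing these identities forward along the second projection, using that $\omega$ is cohomologous to $\text{Vol}_\omega(X)$ times a generator of $H^2(X,\mathbb{R})$ and that the fibre integrals of $c_1(\mathcal{L})\wedge\omega$ and $c_1(\mathcal{L})\wedge c_1(\mathcal{L})$ produce the classes $\eta$ and an explicit combination of $\eta$ and $\theta$ exactly as in the smooth calculation recalled in Lemma 4.1, one arrives at \eqref{1-v-Kclass} with $\text{Vol}_\omega X$ now interpreted as the finite total volume of the degenerate metric. The main obstacle I expect is the first step, the justification of the universal-bundle identity for $\omega_{\text{s-vort}}$ when $\omega$ has point degeneracies; should this require more delicacy than the integrability remark above, the natural fallback is an approximation argument: take a smooth family of K\"ahler forms $\omega_t$ with constant total volume and $\omega_t \to \omega$ as $t \to 0$, apply Lemma 4.1 for each $t > 0$, observe that the right-hand side of \eqref{1-v-Kclass} is independent of $t$, and pass to the limit using continuous dependence of the vortex solutions on the metric.
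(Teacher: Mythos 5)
Your proposal follows the paper's own route exactly: the paper's proof consists of observing that the universal-bundle identity \eqref{metric_formula} continues to hold when $\omega$ has point degeneracies (by going through the arguments of \cite{Ba}), after which the proof of the preceding lemma applies without modification. Your additional remarks on integrability of $F_H$ and the fallback approximation argument are sensible elaborations but do not change the strategy.
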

\begin{proof}
Going through the arguments in \cite{Ba}, one can check that
formula \eqref{metric_formula} for the K\"ahler form on the moduli
space ${\text{\rm Sym}}^d (X)$ still holds when $\omega$ has point
degeneracies. Then the proof of the previous lemma can be applied without
any modifications.
\end{proof}

\medskip
\noindent
To complete the proof of Theorem \ref{prop3},
we will reduce the general case $\beta_j > -1$ to the case $\beta_j \geq 0$ treated
above. This will be done using the method used in the proof of Theorem \ref{existence}. 
 
Let $f: Y \longrightarrow X$ be the $m$-sheeted Galois cover described in the proof of Theorem \ref{existence}. The pull-back operation $h \longmapsto f^\ast h$ defines a one-to-one correspondence between vortex solutions on $(L, s, \omega)$ with parabolic data $\sum_i \alpha_i p_i$ and $\text{Gal}(f)$--invariant vortex solutions on $(f^\ast L , f^\ast s , f^\ast \omega)$ with parabolic data $\sum_i (r_{f^{-1}(p_i)} +1 )\, \alpha_i \, f^{-1}(p_i)$, where $r_x$ is the ramification index of $f$ at the point $x$. This correspondence induces the holomorphic embedding of moduli spaces
\begin{align}\label{cover}
\widetilde{f}\, :\, \text{Sym}^d(X)\, &\longrightarrow\,
{\rm Sym}^{m.d} (Y) \\
\sum_{j=1}^d x_j \,&\longmapsto\,
\sum_{j=1}^d f^{-1}(x_j) \nonumber
\end{align}
where $f^{-1}(x_j)$ is the inverse image counted 
with multiplicities. The image of $\widetilde{f}$ is the subvariety of
Galois-invariant solutions inside the whole moduli space of solutions.

\begin{lemma}\label{lemma4}
Suppose that the metric on the surface $X$ has singularities with negative weight $\beta> -1$. Then the K\"ahler form of the $L^2$ metric $\omega_{\text{\rm s-vort}}^d$ on the moduli space ${\rm Sym}^d(X)$ is well-defined and satisfies the identity
\begin{equation}\label{2}
\omega_{\text{\rm s-vort}}^d (X)\ =\ \frac{1}{m}
\ \widetilde{f}^\ast \,\omega_{\text{\rm s-vort}}^{m.d} (Y)\, ,
\end{equation}
where $\widetilde{f}$ is the map in \eqref{cover} and $\omega_{\text{\rm s-vort}}^{m.d} (Y)$ is the K\"ahler form of the $L^2$-metric on the moduli space of $m.d$ vortices on the surface $Y$ equipped with the metric $f^\ast \omega$.
\end{lemma}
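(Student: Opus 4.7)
The plan is to reduce this lemma directly to the Galois-covering construction already engineered for Theorem \ref{existence} and exploited in Lemma \ref{lem4}. Well-definedness of $\omega^d_{\text{\rm s-vort}}(X)$ at the infinitesimal level is essentially a corollary of Lemma \ref{lem4}: finiteness of the integrals in \eqref{kf} was established there precisely by pulling back to $Y$, where the cover is arranged so that $f^\ast \omega$ has only non-negative singular weights, placing the problem under the case already treated in the preceding lemma. So the first claim requires no additional argument.

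For the identity \eqref{2} I would proceed pointwise at a parabolic pair $\underline{y} \in {\rm Sym}^d(X)$. First I would unfold the differential $d\widetilde{f}$ and verify that it sends a holomorphic family $({\mathcal L}_t, S_t)$ representing $v \in T_{\underline{y}} {\rm Sym}^d(X)$ to the pulled-back family $(f^\ast {\mathcal L}_t, f^\ast S_t)$, which represents $d\widetilde{f}(v) \in T_{\widetilde{f}(\underline{y})} {\rm Sym}^{m.d}(Y)$. Next, invoking the uniqueness part of Theorem \ref{existence}, I would identify the vortex solution $\widehat{h}_t$ on $Y$ with the pull-back $f^\ast h_t$: the latter is Galois-invariant, satisfies the vortex equation for the pulled-back data, and has the correct parabolic asymptotics, so uniqueness forces the two to coincide. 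This is exactly the identification recorded in \eqref{fi}, and it yields $\widehat{\theta}_t = f^\ast \theta_t$ and $\widehat{\eta}_t = f^\ast \sigma_t$.

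At this point the computation \eqref{fi3} of Lemma \ref{lem4} supplies the desired relation. Combining the pull-back formulas with the fact that $f$ is a local isometry away from ramification (so the Hodge star commutes with pull-back) and with the change-of-variable $\int_Y f^\ast \alpha = (\#{\rm Gal}(f)) \int_X \alpha$ for an $m$-sheeted Galois cover, one obtains a pointwise proportionality between $\Vert d\widetilde{f}(v)\Vert^2_Y$ and $\Vert v\Vert^2_X$ with the factor $m$. Since both $\omega^d_{\text{\rm s-vort}}(X)$ and $\omega^{m.d}_{\text{\rm s-vort}}(Y)$ are K\"ahler forms, hence determined on $(1,0)$-vectors by their associated Hermitian norms, polarizing this norm identity produces the claimed equality \eqref{2}.

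The one step that demands genuine care is the identification $\widehat{h}_t = f^\ast h_t$: one must check that $f^\ast h_t$ is indeed a bona fide solution of the vortex equation on $(f^\ast L, f^\ast s, f^\ast \omega)$ with parabolic data $\sum_i (r_{f^{-1}(p_i)} + 1)\, \alpha_i\, f^{-1}(p_i)$, so that the uniqueness assertion of Theorem \ref{existence} can actually be applied on $Y$. Once this is in place, everything else --- holomorphicity of $\widetilde{f}$, the description of $d\widetilde{f}$ on holomorphic families, and the passage from the pointwise norm identity to the K\"ahler-form identity via polarization --- is bookkeeping that assembles results already established in the paper.
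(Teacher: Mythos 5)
Your proposal is correct and takes essentially the same route as the paper: the authors also reduce everything to the Galois cover $f$, use uniqueness of vortex solutions to get the Galois-invariant identification $\widehat{\theta}_t = f^\ast\theta_t$, $\widehat{\eta}_t = f^\ast\sigma_t$ already recorded in \eqref{fi}, and then read off the factor $m = \#{\rm Gal}(f)$ from the integral comparison \eqref{fi3} together with the definition of the $L^2$-metric on ${\rm Sym}^{m.d}(Y)$.
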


This lemma will be proved below. Its importance relies on the fact that the singularities of the K\"ahler form $f^\ast \omega$ on the cover $Y$ all have non-negative weight, as observed in the proof of Theorem \ref{existence}. So Lemma \ref{lemma2} is applicable, and using the parabolic weights at the inverse images $f^{-1}(p_i)$ specified in the proof of Theorem \ref{existence}, we obtain that
\[
[ \omega_{\text{\rm s-vort}}^{md} (Y)] \ = \ \pi\ \Big(\tau \, {\rm Vol}\, Y - 4 \pi\, m \big( d + \sum_i \alpha_i \big)\, \Big) \, \eta' \ + \ 4 \pi^2 \, \theta' \, ,
\]
where the classes $\eta'$ and $\theta'$ in $H^2 ({\rm 
Sym}^{md}(Y),\, {\mathbb R})$ are defined in the same way as $\eta\, ,\theta\,\in\,
H^2({\rm Sym}^d(X),\, {\mathbb R})$ were defined above. Since the metric on $Y$ is the
pullback of the metric on
$X$, the total volumes are related by ${\rm Vol}\, Y = m\, {\rm Vol}\, X\,=\,
\#\text{Gal}(f)\cdot {\rm Vol}\, X$. So in view of \eqref{2} to calculate the cohomology class $[\omega_{\text{\rm s-vort}}^d (X)]$ we only need to compute the pullback classes $\widetilde{f}^\ast\eta'$ and $\widetilde{f}^\ast\theta'$.

For the first one, take a point $y'\, \in\, Y$ such that $f$ is
unramified at $y'$. Take $x'\, :=\, f(y')\, \in\, X$. Let $H_{x'}\, \subset\,
\text{Sym}^d(X)$ (respectively, $H_{y'}\, \subset\,{\rm Sym}^{m.d} (Y)$) be the
hypersurface defined by the image of $\text{Sym}^{d-1}(X)$ (respectively,
${\rm Sym}^{md-1} (Y)$) under the embedding $(x_1\, ,\cdots\, ,
x_{d-1})\, \longmapsto\, (x'\, , x_1\, ,\cdots\, , 
x_{d-1})$ (respectively, $(y_1\, ,\cdots\, , 
y_{md-1})\, \longmapsto\, (y'\, , y_1\, ,\cdots\, ,
y_{md-1})$). So
$H_{x'}$ and $H_{y'}$ represent the cohomology classes $\eta$ and $\eta'$ respectively.
We have 
\[
H_{y'}\bigcap \widetilde{f}(\text{Sym}^d(X))\,=\, H_{x'} \ . 
\]
From this
it follows immediately that
\begin{equation}\label{t1}
\widetilde{f}^\ast\eta'\, = \, \eta \ . 
\end{equation}
Thus Theorem \ref{prop3} is a consequence of Lemma \ref{lem1} below.

\begin{lemma}\label{lem1}
The pullback $\widetilde{f}^\ast\theta''\,\in\, H^2({\rm 
Sym}^d(X),\,{\mathbb R})$ 
coincides with $m \cdot \theta$.
\end{lemma}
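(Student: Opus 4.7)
The plan is to lift the identity to the Jacobians and then reduce it to a linear-algebra identity on the first cohomology of the curves. First I would observe that $\widetilde f$ is intertwined with the Abel--Jacobi morphisms of \eqref{gxd}: for any effective divisor $D$ on $X$,
\[
\gamma^{}_{Y,md}(\widetilde f(D)) \ =\ \mathcal{O}_Y(f^{*}D) \ =\ f^{*}\mathcal{O}_X(D) \ =\ f^{*}\gamma^{}_{X,d}(D)\, ,
\]
so the square with vertical maps $\gamma^{}_{X,d}$, $\gamma^{}_{Y,md}$ and horizontal maps $\widetilde f$ and $f^{*}:\text{Pic}^d(X)\to\text{Pic}^{md}(Y)$ commutes. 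Therefore $\widetilde f^{\,*}\theta' \,=\, \gamma^{*}_{X,d}(f^{*})^{*}\theta_{\text{Pic}^{md}(Y)}$, and the lemma reduces to proving
\[
(f^{*})^{*}\,\theta_{\text{Pic}^{md}(Y)} \ =\ m\cdot\theta_{\text{Pic}^d(X)}
\]
in $H^2(\text{Pic}^d(X),\mathbb{R})$.

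Under the canonical identifications $H^1(\text{Pic}^d(Z),\mathbb{R})\cong H^1(Z,\mathbb{R})$ for $Z\in\{X,Y\}$, the induced map $(f^{*})^{*}$ on first cohomology corresponds to the Gysin transfer $f_{*}:H^1(Y,\mathbb{R})\to H^1(X,\mathbb{R})$ of the branched cover. Indeed, the derivative of $f^{*}:\text{Pic}(X)\to\text{Pic}(Y)$ at the origin is the pullback $H^{0,1}(X)\to H^{0,1}(Y)$, whose dual on holomorphic cotangent spaces is the trace $H^{1,0}(Y)\to H^{1,0}(X)$; together with its complex conjugate this is precisely the Hodge decomposition of $f_{*}$ on $H^1\otimes\mathbb{C}$. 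Since, by the paper's definition, $\theta_{\text{Pic}^d(Z)}\in\bigwedge^{2}H^1(Z,\mathbb{R})$ is the inverse of the intersection form $Q_Z$, the remaining task is to establish the linear-algebra identity $\bigwedge^{2}f_{*}(\theta_Y)=m\,\theta_X$.

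This last step is driven entirely by the projection formula. From $f_{*}f^{*}=m\cdot\mathrm{id}$ on $H^1(X,\mathbb{R})$ I obtain $Q_Y(f^{*}\alpha,f^{*}\beta)=m\,Q_X(\alpha,\beta)$, so $V:=f^{*}H^1(X,\mathbb{R})$ is a $Q_Y$-non-degenerate subspace whose $Q_Y$-orthogonal complement coincides with $\ker f_{*}$; hence $H^1(Y,\mathbb{R})=V\oplus\ker f_{*}$ as a $Q_Y$-orthogonal direct sum. Correspondingly the inverse form $\theta_Y$ splits, the summand in $\bigwedge^{2}\ker f_{*}$ is killed by $\bigwedge^{2}f_{*}$, and the summand in $\bigwedge^{2}V$ equals $m^{-1}\bigwedge^{2}f^{*}(\theta_X)$, because $Q_Y|_V$ is carried to $m\,Q_X$ by the isomorphism $f^{*}:H^1(X,\mathbb{R})\to V$. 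Applying $\bigwedge^{2}f_{*}$ and invoking $f_{*}f^{*}=m\cdot\mathrm{id}$ one final time produces the factor $m^{-1}\cdot m^{2}=m$, finishing the proof.

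The main delicate point is the Hodge-theoretic identification $(f^{*})^{*}\leftrightarrow f_{*}$ on first cohomology: one has to check that the two canonical isomorphisms $H^1(\text{Pic}^d(Z),\mathbb{R})\cong H^1(Z,\mathbb{R})$ implicit in the paper really do transport the pullback $(f^{*})^{*}$ to the Gysin pushforward $f_{*}$, rather than to some twisted variant. Once this identification is in place, the rest of the argument is short and purely symplectic.
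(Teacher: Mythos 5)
Your proposal is correct and follows essentially the same route as the paper: reduce via the commutative square with the Abel--Jacobi maps to the identity $\widehat{f}^{\,*}\theta_{\text{Pic}^{md}(Y)}=m\,\theta_{\text{Pic}^d(X)}$, identify the induced map on $H^1$ with the transfer $f_*$, and exploit $\int_Y f^*\alpha\wedge f^*\beta=m\int_X\alpha\wedge\beta$ together with $f_*f^*=m\cdot\mathrm{id}$. Your orthogonal decomposition $H^1(Y,\mathbb R)=f^*H^1(X,\mathbb R)\oplus\ker f_*$ merely makes explicit the final linear-algebra step that the paper asserts without detail, and it is carried out correctly (the factor $m^{-1}\cdot m^2=m$ checks out).
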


\begin{proof}
Let
$$
\widehat{f}\, :\, \text{Pic}^d(X)\, \longrightarrow\,
\text{Pic}^{m \cdot d}(Y)
$$
be the morphism defined by $\zeta\, \longmapsto\, f^*\zeta$.
The following diagram is evidently commutative:
$$
\begin{matrix}
\text{Sym}^d(X) & \stackrel{\gamma^{}_{X,d}}{\longrightarrow} & 
\text{Pic}^d(X)\\
~\Big\downarrow \widetilde{f} && ~ \Big\downarrow \widehat{f}\\
{\rm Sym}^{m \cdot d} (Y)
& \stackrel{\gamma^{}_{Y,m \cdot d}}{\longrightarrow} & 
\text{Pic}^{m \cdot d}(Y)
\end{matrix}
$$
where $\gamma^{}_{Y,m \cdot d}$ is constructed just as
$\gamma^{}_{X,d}$ is constructed in \eqref{gxd}. Therefore, to 
prove the lemma, it suffices to show that
\begin{equation}\label{aa}
\widehat{f}^*\theta_{\text{Pic}^{m \cdot d}(Y)}\,=\,
m \cdot \theta_{\text{Pic}^d(X)}\, .
\end{equation}

For any $\alpha\, ,\beta\,\in\, H^1(X,\, {\mathbb R})$, we have
\begin{equation}\label{eq-l}
\int_Y (f^\ast\alpha)\wedge (f^\ast\beta) \,=\,
m \int_X \alpha\wedge \beta\, .
\end{equation}
As noted before, suing the cup product on $H^1(X,\, {\mathbb R})$ (respectively,
$H^1(Y,\, {\mathbb R})$), the homology
$H_1(X,\, {\mathbb R})$ (respectively, $H_1(Y,\, {\mathbb R})$) gets identified
with $H^1(X,\, {\mathbb R})$ (respectively,
$H^1(Y,\, {\mathbb R})$). From \eqref{eq-l} it follows that the composition
$$
H^1(X,\, {\mathbb R})\, \stackrel{f^*}{\longrightarrow}\,
H^1(Y,\, {\mathbb R})\, =\, H_1(Y,\, {\mathbb R}) \, \stackrel{f_*}{\longrightarrow}\,
H_1(X,\, {\mathbb R})\, =\,H^1(X,\, {\mathbb R})
$$
coincides with multiplication by $m$. From this the equality
in \eqref{aa} follows.
\end{proof}

\subsection*{Proof of Lemma \ref{lemma4}}

Fix the pulled back K\"ahler form $\omega_Y$
on $Y$. Take any positive integer $m$. 
Let $$M^{m}_{\rm vort}\,=\, {\rm Sym}^{m} (Y)$$ be the moduli space of 
abelian pairs on $Y$ of degree $m$. This moduli space 
${\rm Sym}^m (Y)$ is equipped with a K\"ahler form $\omega_{\rm 
vort}^m (Y)$ constructed using $\omega_Y$; see \cite{IM}, \cite[p. 840, Definition 6.1]{BS}, \cite[p. 309, 
(4)]{Ba}. Below, we will
briefly recall the construction of $\omega_{\rm vort}^m (Y)$.

Take an abelian pair $\underline{y}\, :=\, (\widehat{L}\, 
,\sigma)\,\in\, \text{Sym}^m(Y)$. The hermitian structure on 
$\widehat{L}$ that satisfies the vortex equation for $\underline{y}$ with
respect to $\omega_Y$ will be denoted by
$h_{\underline{y}}$. The Chern connection on $\widehat{L}$ 
for $h_{\underline{y}}$ will be denoted by $\nabla^{\underline{y}}$. 
Let
\begin{equation}\label{vl}
v\, \in\, T_{\underline{y}} \text{Sym}^m(Y)
\end{equation}
be the
holomorphic tangent vector given by a holomorphic family of 
abelian pairs $(\widehat{L}_t\, ,\sigma_t)$, $t\, \in\, 
D_\epsilon$, with $(\widehat{L}_0\, ,\sigma_0)\,=\, 
(\widehat{L}\, ,\sigma)$. Let 
$h_t$ be the hermitian structure on $\widehat{L}_t$ that
satisfies the vortex equation for $(\widehat{L}_t\, ,\sigma_t)$
with respect to $\omega_Y$. Let $\widehat{\nabla}$ be the Chern
connection on $\widehat{L}$ for the family $(\widehat{L}_t\, ,h_t)$
of holomorphic hermitian line bundles.
For any $y\in\, Y$, taking
parallel translations, with respect to $\widehat{\nabla}$, along the radii
of the disk $\{y\}\times D_\epsilon$, we get a $C^\infty$
trivialization of the family of hermitian line bundles 
$\widehat{L}_t$. The Chern connection on $\widehat{L}_t$
for $h_t$ will be denoted by $\nabla^t$. Using the above
$C^\infty$ trivialization of the family of holomorphic line bundles
$\widehat{L}_t$, we have
$\nabla^t\,=\, \nabla^{\underline{y}}+\theta_t$
and $\sigma_t\,=\, \sigma+ \eta_t$, where $\theta_t$ is a 
$1$--form on $Y$
and $\eta_t$ is a smooth section of $\widehat{L}_0\,=\, 
\widehat{L}$. The K\"ahler form $\omega_{\rm vort}^m (Y)$ on 
$\text{Sym}^{m}(Y)$ is given by the formula
$$
\Vert v\Vert^2 \, :=\, \frac{1}{2} \int_Y 
(\frac{d\theta_t}{dt}(0))\wedge \star (\frac{d\theta_t}{dt}
(0)) + \int_Y \Vert \frac{d\eta}{dt}(0)\Vert^2\cdot\Omega_Y\, ,
$$
where $v$ is the tangent vector in \eqref{vl}. The lemma follows from
this description of K\"ahler form on $\text{Sym}^{m}(Y)$.

\section{Abelian $n$-pairs}

\subsection{Existence of solutions and volume of the moduli space}

In this section we will consider the vortex equations for 
abelian $n$-pairs. These are sometimes called semilocal vortices. 
An abelian $n$-pair is a holomorphic line bundle $L 
\longrightarrow X$ together with $n$ holomorphic sections $\phi_1, \ldots , 
\phi_n$, and we will assume that at least one of these sections is
nonzero.
The vortex equation for a hermitian metric $h$ on $L$ is then
\begin{equation} \label{n-vort-equation}
F_h - \sqrt{-1} e^2 \big(\vert \phi_1\vert^2_h + \cdots + \vert \phi_n\vert^2_h - \tau \big)\, \omega_X \,=\, 0\, .
\end{equation}
 Just as in the $n=1$ case treated before, 
if the 
line bundle $L$ is equipped with parabolic data we can consider the 
vortex equations with parabolic singularities. This means that 
we require $h$ to behave 
asymptotically like \eqref{par-assympt} at each 
singular point $p_i\,\in \,X$. We also allow finitely many degeneracies or conical singularities of the metric 
$\omega_X$ on the surface, so that it behaves like \eqref{oU} around each singular point $q_j \in X$. 
\begin{proposition}
Theorem \ref{existence} is valid for the vortex equation \eqref{n-vort-equation} with singularities.
\end{proposition}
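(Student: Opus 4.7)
The plan is to adapt the proof of Theorem \ref{existence} essentially verbatim, treating the $n$-tuple $(\phi_1,\ldots,\phi_n)$ in place of the single section $\phi$. Writing $h = e^{2u} h_0$ for a fixed background parabolic metric $h_0$ representing the data $\sum_i \alpha_i p_i$, and writing $\omega = \rho\,\omega_0$ as in the proof of Lemma \ref{le-e}, Bradlow's reduction turns \eqref{n-vort-equation} into the Kazdan--Warner equation
\begin{equation*}
\Delta_0 u \ = \ K e^{2u} \ - \ K_1\, ,
\end{equation*}
with
\begin{equation*}
K \ = \ -e^2 \rho \, \sum_{i=1}^n |\phi_i|^2_{h_0}\, ,\qquad K_1 \ = \ \sqrt{-1}\, \Lambda_0 F_{h_0} - e^2 \tau \rho\, .
\end{equation*}

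First I would establish the $n$-pair analogue of Lemma \ref{le-e}, namely the case where all metric weights satisfy $\beta_j \geq 0$. The hypotheses of Kazdan--Warner still hold: $K\leq 0$ everywhere, and $K$ is strictly negative on a non-empty open set because at least one $\phi_i$ is assumed nonzero and hence nonzero on an open subset of $X$. The integral condition $\int_X K_1\,\omega_0 < 0$ is precisely the stability inequality \eqref{stability-c}, unchanged from the $n=1$ case, since its derivation only uses the parabolic degree of $L$. So \cite{KW} gives a unique $C^2$ solution $u$, smooth off the singular points, with the same regularity bounds at the singularities, because the coefficient $\rho\,\sum_i |\phi_i|^2_{h_0}$ is still $C^{\nu(\alpha+\beta)}$ there (a finite sum of functions with that regularity). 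Formula \eqref{chern-formula} for the integral of $F_h$ follows exactly as in Lemma \ref{le-e}, since it depends only on $h$ and $h_0$, not on any choice of sections.

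Next, for the general case $\beta_j > -1$, I would again pass to the Galois cover $f:Y\to X$ of \eqref{f}. The pulled-back $n$-tuple $(f^\ast \phi_1,\ldots, f^\ast \phi_n)$ is still nontrivial, the pull-back $f^\ast \omega$ has only non-negative singular weights, and the parabolic data transforms as in the proof of Theorem \ref{existence}. Applying the already-established non-negative case on $Y$ produces a vortex solution $h'$; its uniqueness forces $\mathrm{Gal}(f)$-invariance, so $h'$ descends to a hermitian metric $h$ on $L\to X$ that satisfies \eqref{n-vort-equation} off the ramification locus. The asymptotic form $h = e^{2u}|z|^{2\alpha}$ at each singular point, the regularity class $C^{k(\alpha,\beta)}$ of $u$, and the integral identity \eqref{chern-formula} are all read off from the corresponding statements for $h'$ exactly as in the original argument.

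I do not anticipate any serious obstacle: the only structural change in going from $n=1$ to general $n$ is the replacement of $|\phi|^2_{h_0}$ by $\sum_i |\phi_i|^2_{h_0}$ inside the coefficient $K$, and every property invoked from \cite{KW} and from the Galois descent argument (sign of $K$, strict negativity on an open set, regularity of $K$ and $K_1$ at the singularities, uniqueness) survives this change with no modification.
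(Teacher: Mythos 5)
Your proposal is correct and follows the same route as the paper: the paper's proof of this proposition consists of the single sentence that the proof of Theorem \ref{existence} applies unchanged, and your write-up simply makes explicit why that is so (the only change being $|\phi|^2_{h_0} \mapsto \sum_i |\phi_i|^2_{h_0}$ in the coefficient $K$, which preserves every hypothesis needed for the Kazdan--Warner step and the Galois descent).
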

\begin{proof}
The proof of Theorem \ref{existence} applies unchanged.
\end{proof}
Now call $M^{d, n}_{\text{\rm s-vort}}$ the moduli space of 
$n$-vortices with singularities. Just as for $n=1$, it follows from the 
proposition above that $M^{d, n}_{\text{\rm s-vort}}$ is isomorphic the space of 
abelian $n$-pairs on $L \longrightarrow X$ and that, as a complex manifold, it 
does not depend on the prescription of singularities. Thus it is the same as in the case of smooth $n$-vortices. In particular, for degree $d > 2 g_X - 2$, the manifold $M^{d, n}_{\text{\rm s-vort}}$ is a projective bundle over 
the Jacobian $\text{Pic}^d (X)$ with fiber $\mathbb{C}{\mathbb P}^{n(d+1 -g ) -1}$.
The natural 
$L^2$-metric on the moduli space, however, does depend on the existence of singularities. In \cite{Ba} it was shown that in the smooth case the K\"ahler class of the $L^2$-metric is
\[
[\omega^{d, n}_{\text{\rm vort}}] \ = \ \pi \Big( \tau \, {\rm Vol}\, X - 
\frac{2 \pi}{e^2}\, d \Big) \, \eta \ + \ \frac{2\pi^2}{e^2} \, \theta \, ,
\] 
where $\eta$ is the generator of the cohomology of the projective 
fiber and $\theta$ is the pull-back to $M^{d, n}_{\text{\rm 
vort}}$ of the theta class in $\text{Pic}^d (X)$. (In the $n=1$ 
case, these classes $\eta$ and $\theta$ coincide with the 
classes with the same name used earlier.) For 
semilocal vortices with singularities the analogous 
result is the following:

\begin{proposition}
Assume that the degree of the line bundle satisfies $\tau e^2 \, ({\rm Vol}\, X) / (2\pi)
- \sum_i \alpha_i > d > 2g_X -2$. Then the K\"ahler class of the natural $L^2$-metric on 
the moduli space $M^{d, n}_{\text{\rm s-vort}}$ of singular vortices is
\[
[\omega^{d, n}_{\text{\rm s-vort}}] \ = \ \pi \Big( \tau \, 
{\rm Vol}\, X - \frac{2\pi}{e^2}\, \big(d + \sum_{i=1}^r 
\alpha_i \big) \Big) \, \eta \ + \ \frac{2 \pi^2}{e^2} \, \theta \, .
\] 
\end{proposition}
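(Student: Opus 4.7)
The plan is to mirror the three-step strategy used in the proof of Theorem \ref{prop3}: first treat the case where $\omega$ is smooth and only parabolic singularities of $h$ are present, then allow nonnegative weights $\beta_j \geq 0$ on $\omega$, and finally reduce the general case $\beta_j > -1$ via the Galois cover $f:Y\longrightarrow X$ from the proof of Theorem \ref{existence}. The two-sided hypothesis $\tau e^2 \,{\rm Vol}\, X/(2\pi) - \sum_i\alpha_i > d > 2g_X - 2$ plays a double role: the stability inequality guarantees existence of vortex solutions via the semilocal analog of Theorem \ref{existence}, while $d > 2g_X -2$ ensures that $M^{d,n}_{\text{\rm s-vort}}$ is a projective bundle over ${\rm Pic}^d(X)$ whose cohomology is generated by the tautological hyperplane class $\eta$ and the pulled-back theta class $\theta$.

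For the smooth-$\omega$ case, I would build a universal line bundle $\mathcal{L}\longrightarrow M^{d,n}_{\text{\rm s-vort}}\times X$ with universal holomorphic sections $\mathcal{S}_1,\ldots,\mathcal{S}_n$ and a hermitian metric $H$ whose restriction to $\{q\}\times X$ recovers the vortex data $(L,\phi_1,\ldots,\phi_n,h)$, exactly as in \cite{BS,Ba}. An argument parallel to the one in \cite{Ba} extends the calculation to the parabolic setting and produces
\[
\omega^{d,n}_{\text{\rm s-vort}} \ = \ \int_X \frac{\sqrt{-1}\tau}{2}\,F_H\wedge \omega \ +\ \frac{1}{4e^2}\,F_H\wedge F_H\, ,
\]
even though $H$ degenerates at the parabolic points. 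Writing $H = \rho^{-1} H_0$ with $H_0$ a smooth background hermitian metric and $\rho$ a smooth nonnegative function on $X$ representing the divisor $\sum_i\alpha_i\,p_i$, the cohomology class $[F_H]$ splits as $-2\pi\sqrt{-1}\,c_1(\mathcal{L}) - [\partial\bar\partial\log\rho]$, and Stokes' theorem identifies the second term with $2\pi\sqrt{-1}(\sum_i\alpha_i)[X]$, just as in the proof of Theorem \ref{prop3}. Substituting into the integral formula amounts, in the smooth semilocal identity of \cite{Ba}, to the shift $\tau\mapsto \tau - \sum_i 2\pi\alpha_i/(e^2\,{\rm Vol}_\omega X)$, which reproduces the claimed class. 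The case $\beta_j \geq 0$ then follows identically, since $\omega$ and $F_H$ remain continuous and the integral formula is unchanged.

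For the general case $\beta_j > -1$, the pullback of vortex data along $f$ gives a holomorphic embedding
\[
\widetilde{f}\,:\,M^{d,n}_{\text{\rm s-vort}}(X)\,\longrightarrow\, M^{md,n}_{\text{\rm s-vort}}(Y)
\]
onto the ${\rm Gal}(f)$-invariant subvariety. The semilocal analog of Lemma \ref{lemma4} is proved by the same Chern-connection parallel-transport trivialization argument and yields $\omega^{d,n}_{\text{\rm s-vort}}(X) = m^{-1}\,\widetilde{f}^*\omega^{md,n}_{\text{\rm s-vort}}(Y)$. Applying the step-(ii) formula on $Y$, the coefficient of $\theta'$ is handled by the argument of Lemma \ref{lem1} verbatim, yielding $\widetilde{f}^*\theta' = m\cdot\theta$. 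For the tautological class, the Galois cover induces a linear inclusion $H^0(X,L)^{\oplus n}\hookrightarrow H^0(Y,f^*L)^{\oplus n}$ sending the line $[\phi_1,\ldots,\phi_n]$ to $[f^*\phi_1,\ldots,f^*\phi_n]$, so $\widetilde{f}$ is covered by a linear embedding of the tautological line bundles and $\widetilde{f}^*\eta'=\eta$. Combining with ${\rm Vol}\,Y = m\,{\rm Vol}\,X$ produces the formula in the statement.

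The step I expect to be the main obstacle is the identification $\widetilde{f}^*\eta'=\eta$ in the semilocal setting. For $n=1$, $\eta$ has a direct divisorial interpretation on the symmetric product and the equality is almost tautological; here, however, $\eta$ is the tautological hyperplane class of a projective bundle over ${\rm Pic}^d$, and verifying that $\widetilde{f}$ is fiberwise a linear embedding of projective spaces requires the cohomology vanishing provided by $d>2g_X-2$ on both $X$ and $Y$ (so that the fiber dimensions are $n(d+1-g_X)-1$ and $n(md+1-g_Y)-1$ respectively and depend regularly on the point in the Jacobian), together with a compatibility check between the parabolic structure on $L$ and the pulled-back parabolic structure on $f^*L$.
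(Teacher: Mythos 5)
Your strategy is workable in outline but it is not the paper's, and as written it contains a genuine gap. The paper proves this proposition in a few lines, with no universal-bundle computation and no covering argument for $n$-pairs: since $d > 2g_X-2$, the projection $\pi_0 : M^{d,n}_{\text{\rm s-vort}} \longrightarrow \text{Pic}^d(X)$ is a projective bundle, so any degree-two class, in particular $[\omega^{d,n}_{\text{\rm s-vort}}]$, can be written as $\lambda\,\eta + \pi_0^*\zeta$ with $\lambda$ a scalar and $\zeta \in H^2(\text{Pic}^d(X),\mathbb{R})$. Restricting along the isometric embedding $\iota : M^{d,1}_{\text{\rm s-vort}} \hookrightarrow M^{d,n}_{\text{\rm s-vort}}$, $(L,h,\phi)\mapsto (L,h,\phi,0,\ldots,0)$, whose image is a projective sub-bundle (so $\iota^*\eta=\eta$), gives $[\omega^{d,1}_{\text{\rm s-vort}}] = \lambda\,\eta + \pi_1^*\zeta$, and comparison with Theorem \ref{prop3} (the $n=1$ case, already established) pins down $\lambda$ and $\zeta$. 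All the hard work --- universal bundle, the $\rho$-trick, the Galois cover --- is thus confined to $n=1$ and never repeated.

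The gap in your version is in the covering step. To ``apply the step-(ii) formula on $Y$'' you need $M^{md,n}_{\text{\rm s-vort}}(Y)$ to be a projective bundle over $\text{Pic}^{md}(Y)$ with Kähler class expressible in $\eta'$ and $\theta'$, i.e.\ you need $md > 2g_Y-2$. By Riemann--Hurwitz for the cover of Theorem \ref{existence}, $2g_Y-2 = m\big(2g_X-2+\sum_j(1-1/n_j)\big)$, so this amounts to $d > 2g_X-2+\sum_j(1-1/n_j)$, which does not follow from the hypothesis $d>2g_X-2$ once there are several conical points. (This problem does not arise for $n=1$, where the moduli space is $\text{Sym}^{md}(Y)$ and $\eta',\theta'$ are defined regardless of degree --- which is why the paper runs the covering argument only there.) A second, smaller issue is that you assert the integral formula $\omega^{d,n}_{\text{\rm s-vort}} = \int_X \frac{\sqrt{-1}\tau}{2}F_H\wedge\omega + \frac{1}{4e^2}F_H\wedge F_H$ for $n$-pairs by analogy; this is plausible but needs its own derivation (the horizontality condition and the linearized equations now involve all $n$ sections), and the paper deliberately avoids having to prove it. By contrast, the point you single out as the main obstacle, $\widetilde f^*\eta'=\eta$, is comparatively harmless: $f^* : H^0(X,L)\to H^0(Y,f^*L)$ is an injective linear map, so $\widetilde f$ is fiberwise a linear embedding of projective spaces and pulls back the hyperplane class to the hyperplane class --- provided both sides actually are projective bundles, which returns you to the first gap.
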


\begin{proof}
Consider the natural projection $\pi_0 \,:\, M^{d, n}_{\text{\rm 
s-vort}} \,\longrightarrow\, 
\text{Pic}^d (X)$ that takes the class of a $n$-pair $(L, \phi_1 , \ldots , \phi_n)$ to the class of the line bundle $L$ in the Picard group. Since $d > 2g_X -2$ this projection defines a projective bundle, and we can write the K\"ahler class of the $L^2$ metric on the moduli space as
\[
[\omega^{d, n}_{\text{\rm s-vort}}] \ = \ \lambda\, \eta \ + 
\ \pi^\ast_0 \, \zeta \, ,
\]
where $\lambda$ is a scalar and $\zeta$ is a cohomology 2-class on the base
$\text{Pic}^d (X)$. Now let 
$$\iota : M^{d, 1}_{\text{\rm 
s-vort}} \longrightarrow M^{d, n}_{\text{\rm s-vort}}$$
 be the 
isometric embedding that takes $(L, h, \phi)$ to $(L, h, \phi, 0, 
\cdots, 0)$. The image of $\iota$ is a sub-bundle of $M^{d, 
n}_{\text{\rm s-vort}} \longrightarrow \text{Pic}^d (X)$ with 
fiber $\mathbb{C}\mathbb{P}^{d -g}$, so clearly $\iota^\ast \eta 
= \eta$ and $\iota^\ast \pi^\ast_0 \zeta\,=\,\pi^\ast_1 \zeta$, where
$\pi_1$ is the restriction of $\pi_0$ to the sub-bundle.
In other words,
\[
[\omega^{d, 1}_{\text{\rm s-vort}}] \ = \ \iota^\ast 
[\omega^{d, n}_{\text{\rm s-vort}}] \ = \ \lambda \, \eta \ + 
\ \pi^\ast_1 \, \zeta \, . 
\]
Comparing with \eqref{1-v-Kclass} we conclude that $\lambda$ and 
$\zeta$ are as stated in the proposition. This completes the 
proof.
\end{proof}
Since the K\"ahler class is known and $M^{d, n}_{\text{\rm 
s-vort}}$ is a projective bundle, the intersection calculations to compute the volume and total scalar curvature are quite simple \cite{Ba}. For example we have
\[
{\rm Vol}\; M^{d, n}_{\text{\rm s-vort}} \ = \ \pi^{l}\ \sum_{i=0}^g \ \frac{ g!\: n^{g-i}}{i! \: (l - i)! \: (g-i)!} \ \Big(\frac{2\pi}{e^2}\Big)^i \ \Big( \tau \, \big( {\rm Vol}\, X - \sum_{i} \frac{2 \pi}{e^2 \tau} \alpha_i \big) - \frac{2 \pi}{e^2}\, d \Big)^{l-i} \, ,
\]
where $l\,=\, g + n(d+1 - g) -1$. Both the 
volume and the total scalar curvature of the moduli space of 
vortices with singularities can be obtained from the corresponding values for 
the moduli space of vortices without singularities by the usual substitution ${\rm 
Vol}\, X \; \longrightarrow \; {\rm Vol}\, X - \sum_{i} \frac{2 
\pi}{e^2 \tau}\, \alpha_i \, $.

\subsection{Volume of spaces of holomorphic curves}

For $d \geq 2g$ the manifold $M^{d, n}_{\text{\rm s-vort}}$ can be regarded as a 
compactification of the space $\mathcal{H}^{d, n-1}$ of 
holomorphic maps $X \longrightarrow \mathbb{C}{\mathbb P}^{n-1}$ 
of degree 
$d$ (see \cite{BDW, Ba}). A heuristic argument similar to the one 
given in the last reference for regular vortices suggests that 
the pointwise limit as $e^2 \rightarrow \infty$ of the form 
$\omega^{d, n}_{\text{\rm s-vort}}$ over the domain 
$\mathcal{H}^{d, n-1} \,\subset\, M^{d, n}_{\text{\rm s-vort}}$ 
coincides with the K\"ahler form $\omega^{d, n-1}_{\text{\rm 
map}}$ of the natural $L^2$-metric on the space of maps $(X, 
\omega_X) \longrightarrow (\CC {\mathbb P}^{n-1}, \pi \tau 
\cdot \omega_{\text{\rm FS}})$, where $\omega_{\text{\rm FS}}$ is 
the 
normalized Fubini-Study form (its cohomology class is the generator
of the integral cohomology). Then the limit
\[
\lim_{e^2 \rightarrow \infty} {\rm Vol}\; M^{d, n}_{\text{\rm s-vort}} \ = \ \frac{  n^{g}}{\big[n(d+1-g) +g -1\big]! } \ \big( \pi \tau \, {\rm Vol}\, X \big)^{n(d+1 -g) +g -1}
\]
should presumably be interpreted as the volume of the non-compact 
Riemannian manifold $$(\mathcal{H}^{d, n-1} , \omega^{d, 
n-1}_{\text{\rm map}})\, .$$ Notice that the weights $\alpha_i$ disappear in this limit, and the result depends on $\omega_X$ only through the volume ${\rm Vol}\, X $.
This suggests that the conjectural formulae in \cite{Ba} for the volume and total scalar curvature of $\mathcal{H}^{d, n-1}$ should hold unchanged when $\omega_X$ has isolated singularities of the form \eqref{oU}.

\section{Vortices on hyperbolic surfaces}

Let $f\,:\, Y\,\longrightarrow\, X$ be a holomorphic map between two 
connected Riemann surfaces. The derivative $\dd f$ is a section of 
the holomorphic 
line bundle $L\, :=\, T^\ast Y \otimes f^\ast TX$. This bundle 
has a natural hermitian metric $h$ induced by the Riemannian metrics on 
the surfaces. The curvature form of this hermitian metric is 
\begin{equation}
F_h \ = \ -F_Y \ + \ f^\ast F_X \ = \ \sqrt{-1} (K_Y \ - \ |\dd f|^2 \ 
K_X) \ {\rm vol}_Y
\end{equation}
as a $2$-form over $Y$, where $K_X$ and $K_Y$ are the scalar curvatures of 
the surfaces. So if we take $X$ and $Y$ to be hyperbolic surfaces, i.e. 
if we take the scalar curvatures to be negative and constant, we see that $(L, \dd f , h )$ satisfies the abelian vortex equations over $Y$.
This curious fact was first observed by Witten \cite{Wi} in the case where $X$ and $Y$ are the hyperbolic plane; an observation that allowed him to explicitly construct all the vortex solutions on ${\mathbb H}^2$. Recently, after phrasing the observation in a more geometric language, Manton and Rink \cite{MR} have used it to study vortices on many other hyperbolic surfaces.
An extension of this result is the following.

\begin{proposition}\label{prop4}
Let $f\,:\, Y\,\longrightarrow\, X$ be a holomorphic map between 
hyperbolic surfaces of scalar curvature $K_Y =K_X = - \tau/2$. 
If $(L'\, 
, h') \,\longrightarrow\, X$ is a hermitian line bundle with a section 
$\phi'$ that satisfies the vortex equations over $X$, then the triple
$(L\, ,h\, ,\phi)$, where
\begin{enumerate}
\item $L \ := \ T^\ast Y \otimes f^\ast TX \otimes f^\ast L'$

\item $h \ := \ g^\ast_Y \cdot f^\ast g_X \cdot f^\ast h'$

\item $\phi \ := \ \dd f \otimes f^\ast \phi'$\, ,
\end{enumerate}
satisfies the vortex equations over $Y$. 
\end{proposition}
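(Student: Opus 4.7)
The plan is to verify the vortex equation $F_h \, =\, \sqrt{-1}\, e^2 (|\phi|_h^2 - \tau)\, \omega_Y$ on $Y$ for the triple $(L, h, \phi)$ by direct computation, decomposing both sides according to the tensor factorization of $L$ and $\phi$. The argument simply extends the curvature calculation given in the opening paragraph of Section 6 by inserting the pulled-back vortex data from $X$, and no analytic input beyond the fact that $\phi'$ already solves the vortex equation on $X$ is needed.

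First I would compute the Chern curvature of the product hermitian metric $h$ on the tensor product line bundle $L$. By additivity of Chern curvature under tensor products,
$$ F_h \ = \ F_{T^*Y,\, g_Y^*} \ + \ f^* F_{TX,\, g_X} \ + \ f^* F_{L',\, h'}\, . $$
The sum of the first two terms was computed at the start of Section 6 to be $\sqrt{-1}\,(K_Y - K_X |df|^2)\, \omega_Y$, where $|df|^2$ is the squared norm of $df$ as a section of $T^*Y \otimes f^*TX$, equivalently the function defined by $f^*\omega_X = |df|^2\, \omega_Y$ (the identification uses the holomorphicity of $f$). For the third term, pulling the vortex equation for $(L', h', \phi')$ back along $f$ yields
$$f^* F_{L', h'} \ = \ \sqrt{-1}\, e^2 \big( |f^*\phi'|^2_{f^*h'} - \tau \big)\, |df|^2\, \omega_Y\, .$$

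Next, the right-hand side splits in the same way: since $h$ is a product of hermitian metrics on the three tensor factors and $\phi = df \otimes f^*\phi'$ is the corresponding elementary tensor, one has $|\phi|_h^2 = |df|^2 \cdot |f^*\phi'|^2$. Subtracting $\sqrt{-1}\, e^2 (|\phi|_h^2 - \tau)\, \omega_Y$ from $F_h$, the mixed term $\sqrt{-1}\, e^2 |df|^2 |f^*\phi'|^2\, \omega_Y$ cancels and the remaining difference collapses to
$$\sqrt{-1}\,(K_Y + e^2 \tau)(1 - |df|^2)\, \omega_Y\, . $$
Under the hyperbolicity hypothesis $K_Y = K_X = -\tau/2$ together with the normalization $e^2\tau = -K_Y$ implicit in this section --- precisely the relation that makes Witten's original solution $(T^*Y \otimes f^*TX,\, df,\, g_Y^*\cdot f^*g_X)$ a vortex, as observed in the paragraph preceding the proposition --- this prefactor vanishes and the two sides agree.

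The only obstacle is careful bookkeeping with sign conventions: one must use $F_{T^*Y} = -F_{TY}$ and $F_{TY} = -\sqrt{-1}\, K_Y\, \omega_Y$ consistently, and keep track of the Jacobian factor $|df|^2$ arising whenever a $(1,1)$-form on $X$ is pulled back along $f$. Once this is under control, the identity drops out essentially automatically from the above three-line decomposition.
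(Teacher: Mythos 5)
Your proof is correct and follows essentially the same route as the paper: additivity of the Chern curvature over the tensor factors, the pulled-back vortex equation on $X$ contributing the $|df|^2$ Jacobian, multiplicativity of the pointwise norm, and cancellation via $K_Y=K_X=-\tau/2$. You also usefully make explicit the normalization $e^2\tau=-K_Y$ (i.e.\ $e^2=1/2$) that the paper's computation assumes implicitly.
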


\begin{remark}
{\rm This means that for hyperbolic surfaces there is a natural way of 
obtaining vortex solutions by pull-back of other solutions. The 
observation of \cite{Wi} and \cite{MR} corresponds to the case 
where $L'$ is trivial, $h'=1$ and $\phi' = \sqrt{\tau}$.} Observe that the section $\phi$ vanishes exactly at the ramification points of $f$ plus the inverse image of the zeroes of $\phi'$.
\end{remark}

\begin{proof}
Using the vortex equation on $X$, the curvature of $(L, h)$ is
\begin{align*}
F_h \ &= \ -F_Y \ + \ f^\ast ( F_X + F_{h'}) \ = \ \sqrt{-1}\, K_Y \, {\rm 
vol}_Y 
\ - \ \sqrt{-1} ( K_X - \frac{1}{2} | f^\ast \phi' |_{h'}^2 + \frac{1}{2} 
\tau) \ 
f^\ast {\rm vol}_X \\
&= \ \frac{\sqrt{-1}}{2} ( -\tau + |\dd f|^2 | f^\ast \phi' |_{h'}^2 ) \ 
{\rm vol}_Y \ = \ \frac{\sqrt{-1}}{2} \big(|\phi|_h^2 - \tau \big) \ {\rm vol}_Y 
\ .
\end{align*}
So the vortex equation on $Y$ is satisfied.
\end{proof}

\begin{remark}
{\rm A similar calculation shows that the result also holds when 
$L'$ is a vector bundle over $X$. In this case, if $(L' \, , 
h'\, , \phi')$ satisfies the non-abelian vortex equations, so 
will $(L\, , h\, , \phi)$. A version of it holds for the 
coupled vortex equations as well.}
\end{remark}

\noindent
When the surfaces $X$ and $Y$ are compact the utility of Proposition \ref{prop4} as tool to obtain smooth vortex solutions on $Y$ is limited, as pointed out in \cite{MR}. This is because smooth hyperbolic metrics only exist for surfaces with genus 2 or more, by Gauss-Bonnet, and in this case the few holomorphic maps $f : Y \longrightarrow X$ that exist are isolated, i.e. do not have moduli. 
But notice that Proposition \ref{prop4} is equally 
valid when the metrics $g_X$, $g_Y$ and $h'$ have point singularities. In this case, 
of course, $h$ will in general be singular as well. It follows from the definition of $h$ that if $g_Y$ has a conical singularity of 
order $\beta_Y$ at a point $z=z_0$ and the metrics $g_X$ and $h'$ have a singularities of 
order $\beta_X$ and $\alpha'$, respectively, at $f(z_0)$, then the metric $h$ on $L$ has a parabolic point of 
weight $\alpha = \beta_X - \beta_Y + \alpha'$ at the point $z_0$.
So if are willing to consider vortex solutions with singularities, Proposition \ref{prop4} allows us to construct much bigger families of solutions. For example punctured Riemann spheres and punctured tori admit hyperbolic metrics, and moreover any meromorphic function on $Y$ defines a holomorphic map $Y \longrightarrow \CC \mathbb{P}^1$. 

As for explicit vortex solutions, a large family of them can be obtained if we take the surfaces $X$ and $Y$ to be the thrice punctured Riemann sphere and the map $f$ to be any rational map $\CC \mathbb{P}^1 \longrightarrow \CC \mathbb{P}^1$. In this case the unique hyperbolic metric on $\CC \mathbb{P}^1 \setminus \{0 , 1, \infty \}$ with singularities of order 
\begin{equation} \label{weights}
-1 < \beta_0 < 0 \, , \qquad -1 < \beta_1 < 0 \, , \qquad -1 < \beta_\infty < 0 \, , \qquad \beta_0 + \beta_1 + \beta_\infty < -2 
\end{equation}
has been written down explicitly in recent work of Kraus, Roth and Sugawa \cite{KRS}. Since rational maps on the sphere are also explicit, we can write very non-trivial singular vortex solutions on the punctured sphere. Although perfectly valid, most of these solutions will not fall within the class that has been studied in this article, because they will have points with negative parabolic weight $\alpha = \beta_X - \beta_Y + \alpha'$. In other words, the hermitian metric $h$ may explode at points in the inverse image $f^{-1}(\{0 , 1, \infty \})$. 
Nevertheless, we can still write a simple example where this explicit construction gives a continuous and non-trivial vortex solution $h$. Just take $f$ to be the identity map; $L'$ to be the trivial bundle with 
$h'=1$ and $\phi' = \sqrt{\tau}$; and choose the hyperbolic metrics $g_Y$ and $g_X$ on the punctured sphere $\CC \mathbb{P}^1 \setminus \{0 , 1, \infty \}$ with negative weights at the three singularities satisfying \eqref{weights} and 
\[
(\beta_Y)_0 \leq (\beta_X)_0 \ , \qquad (\beta_Y)_1 \leq (\beta_X)_1 \ , \qquad (\beta_Y)_\infty \leq (\beta_X)_\infty \ .
\]
Then $L$ is the trivial bundle, the section $\phi$ is constant and equal to $\sqrt{\tau}$, and the hermitian metric
\begin{equation} \label{explicitsolution}
h \ =\  g_{X,\, \beta_X} \ /\  g_{Y,\, \beta_Y} \ 
 \end{equation}
 is a non-trivial vortex solution on $L \longrightarrow (Y, g_Y)$ with parabolic weight $(\beta_X)_p - (\beta_Y)_p$ at each of the three singular points $p= 0, 1, \infty$. Using the formulae of the Appendix, this vortex solution $h$ can be written down in terms of hypergeometric functions on $\CC \setminus \{ 0, 1\}$. The relation between vortex solutions and hyperbolic metrics illustrated in this example will be clarified in future work.

\medskip
\noindent
\textbf{Acknowledgements.}\, The second author wishes to thank
Instituto Superior T\'ecnico, where a large part of the work was carried out,
for its hospitality. The visit to IST was funded by the FCT project
PTDC/MAT/099275/2008. The first author thanks CAMGSD and Project  PTDC/MAT/119689/2010 of FCT - POPH/FSE for a generous fellowship.

\section*{Appendix}

For the sake of completeness we will reproduce here, using our notation, the explicit hyperbolic metrics on $\CC \mathbb{P}^1 \setminus \{ 0, 1 , \infty \}$ of curvature $-1$ found by Kraus, Roth and Sugawa \cite{KRS}. By \eqref{explicitsolution} they determine explicit and non-trivial vortex solutions. Identify the thrice punctured sphere with the punctured plane $\CC \setminus \{ 0, 1\}$ and choose weights on the singularities satisfying $-1 < \beta_0 , \, \beta_1 \, , \beta_\infty < 0$ and $ \beta_0 + \beta_1 + \beta_\infty < -2$. Define the constants
\[
\lambda\ := \ - (\beta_0 + \beta_1 - \beta_\infty)/2 \, , \qquad \delta \ := \ - (\beta_0 + \beta_1 + \beta_\infty - 2)/2 \, , \qquad \gamma \ := - \beta_0 \ .
\] 
Then $0 < \delta \leq \lambda$ and $\lambda + \delta < \gamma < 1$. Consider the hypergeometric functions on the plane
\[
\varphi_1 (z) \ := \ F (\lambda , \delta , \gamma;\, z)\, , \qquad \varphi_2 (z) \ := \ F (\lambda , \delta , \lambda + \delta - \gamma +1;\, 1 - z) \ ,
\]
so that $\varphi_1$ is analytic on $\CC \setminus [1, +\infty)$ and $\varphi_2$ is analytic on $\CC \setminus ( - \infty , 0 ]$. Then the hyperbolic metric on the thrice punctured sphere with the chosen weights is given by
\[
g \ = \ g_{\beta_0 , \beta_1 , \beta_\infty} (z) \ \dd z \otimes \dd \bar{z} \ ,
\]
where the coefficient function is 
\[
g_{\beta_0 , \beta_1 , \beta_\infty} (z) \ = \ \frac{ 2\, \vert z \vert^{\beta_0}\, \vert 1- z \vert^{\beta_1}\, K_3 }{K_1 \, \vert \varphi_1 (z) \vert^2\, +\, K_2 \, \vert \varphi_2 (z) \vert^2 \, + \, 2 \, {\rm Re}(\varphi_1 (z) \, \varphi_2 (\bar{z}))} \ ,
\]
and the remaining constants are defined by
\begin{align*}
K_1\ &:= \ - \, \frac{\Gamma (\gamma - \lambda) \, \Gamma(\gamma - \delta)}{\Gamma (\gamma) \, \Gamma(\gamma - \lambda - \delta)} \, \qquad 
K_2\ := \ - \, \frac{\Gamma (\lambda +1 -\gamma) \, \Gamma(\delta +1- \gamma)}{\Gamma (1 - \gamma) \, \Gamma(\lambda + \delta + 1 - \gamma)} \, , \\
K_3\ &:= \ \sqrt{ \frac{\sin(\pi \lambda) \, \sin(\pi \delta)}{\sin(\pi (\gamma - \lambda))\, \sin(\pi(\gamma - \delta))} } \ \cdot \ 
\frac{\Gamma (\lambda + \delta + 1 - \gamma) \, \Gamma(\gamma)}{\Gamma(\lambda) \, \Gamma(\delta)} \ .
\end{align*}


\end{document}